\newtheorem{theorem}{Theorem}[section]
\newtheorem{lemma}[theorem]{Lemma}
\theoremstyle{definition}
\newtheorem{definition}[theorem]{Definition}
\newtheorem{example}[theorem]{Example}
\theoremstyle{remark}
\newtheorem{remark}[theorem]{Remark}
\numberwithin{equation}{section}
\begin{document}
\setcounter{page}{1}

\title[Dynamical ideals of non-commutative rings]{Dynamical ideals of non-commutative rings}

\author[Nikolaev]
{Igor V. Nikolaev$^1$}

\address{$^{1}$ Department of Mathematics and Computer Science, St.~John's University, 8000 Utopia Parkway,  
New York,  NY 11439, United States.}
\email{\textcolor[rgb]{0.00,0.00,0.84}{igor.v.nikolaev@gmail.com}}


\subjclass[2010]{Primary 16D25; Secondary 57N13.}

\keywords{cyclic division algebras, 4-manifolds.}


\begin{abstract}
A dynamical analog of the prime ideals for simple non-commutative rings is introduced.
We prove a factorization theorem for the dynamical ideals.  
The result is used to classify the surface knots and links  in  
the smooth 4-dimensional manifolds.

\end{abstract}

\maketitle

\section{Introduction}
The concept of an ideal is  fundamental in  commutative algebra. 
Recall that the  prime factorization in the ring of integers $O_K$ of  
a number field $K$  fails to be unique.  To fix the problem,  one needs 
to complete  the set of the prime numbers of $O_K$ by the   
``ideal numbers'' lying in an  abelian extension of $K$
[Kummer 1847]  \cite{Kum1}.
A description  of the ``ideal number'' in terms of the ring $O_K$ leads to 
the notion of an ideal [Dedekind 1863]  \cite{Ded1}.

Formally,  the ideal of a commutative ring $R$ is defined as a  subset $I\subseteq R$, 
such that $I$ is additively closed  and $IR\subseteq I$.  However,  the true power  of the ideals comes from
 their  geometry.  For instance, if $R$ is the coordinate ring of an affine variety $V$,
 then the prime ideals of $R$ make up a topological space homeomorphic to $V$. 
This  link  between algebra and geometry is  critical, e.g.  for  the Weil Conjectures
[Weil 1949] \cite{Wei1}.

Although the notion of an ideal adapts to the non-commutative rings using the
  left, right and two-sided ideals,  such an approach seems devoid 
of meaningful  geometry. 
The drawback is  that  the ``coordinate rings'' in non-commutative 
geometry are usually  simple, i.e. have only trivial ideals  \cite[Section 5.3.1]{N}. 
Moreover, such rings contain the idempotent elements (projections) and therefore the Ore localization 
of a domain fails in general.

\medskip
The aim of our note is an analog of  the  ideals  for simple non-commutative rings $R$
arising in the geometry of elliptic curves \cite[Section 6.5.1]{N} and topology of the 4-dimensional manifolds \cite{Nik2}.  
Our construction is similar to  [Kummer 1847]  \cite{Kum1}.
Roughly, the idea is this.
 Instead of a subset  $I\subseteq R$,  we take  a partition $\mathscr{D}_{\alpha}$ 
of $R$  by the orbits $\{\alpha^{\mathbf{Z}}(x)~|~x\in R\}$ of an outer automorphism 
$\alpha: R\to R$.   Let $\alpha$ be given by the formula:
\begin{equation}\label{eq1.1}
\alpha(x)=\mathbf{u}x\mathbf{u}^{-1}, \qquad\forall x\in R,  
\end{equation}
where $\mathbf{u}$ is an  ``ideal number'' lying outside $R$. 
We want to extend  $R$ by $\mathbf{u}$ so that $\alpha$ becomes an inner automorphism 
given by the formula
(\ref{eq1.1}). Such an extension is known to coincide with the crossed product $R\rtimes_{\alpha}\mathbf{Z}$,
where $\mathbf{u}$ is the generator of $\mathbf{Z}$. 
  The dynamical system  $\mathscr{D}_{\alpha}:=R\rtimes_{\alpha}\mathbf{Z}$ 
will be called  a  {\it dynamical ideal} (or, dynamial for short) of  $R$.

\medskip
The dynamical system $\mathscr{D}_{\alpha}$ is called   minimal,
if the  $\mathscr{D}_{\alpha}$ does not split   into  a union of 
simpler dynamical  sub-systems.
The $\mathscr{D}_{\alpha}$ is minimal if and only if the crossed product 
$R\rtimes_{\alpha}\mathbf{Z}$ is a simple algebra.
The following model  example shows, that the minimal dynamials
are a proper generalization of  the prime ideals to the case of
 simple non-commutative rings. 
\begin{example}\label{ex1.1}
{\bf (\cite[Section 6.5.1]{N})}
Let $\mathscr{E}(K)$ be a non-singular  elliptic curve over the number field $K$
having the coordinate ring $\mathfrak{V}(\mathscr{E})$. 
Let $\mathscr{E}(\mathbf{F}_p):=\mathfrak{V}(\mathscr{E})/ \mathscr{P}$ be the localization of 
 $\mathscr{E}(K)$ at the prime ideal $\mathscr{P}\subset \mathfrak{V}(\mathscr{E})$  over a prime number $p$. 
Denote by $\mathscr{A}_{\theta}$  the non-commutative torus, i.e. a simple  $C^*$-algebra  $\mathscr{A}_{\theta}$ 
generated by the unitary operators $U$ and $V$ satisfying  the relation 
\begin{equation}\label{eq1.2.1}
VU=e^{2\pi i\theta}UV,  \quad\hbox{where} \quad \theta\in\mathbf{R}-\mathbf{Q}.
\end{equation}
 It is verified directly, that   the substitution 
\begin{equation}\label{eq1.3}
\left\{
\begin{array}{ccl}
U'&=& e^{\pi i ac} ~U^a ~V^c\\
V' &=& e^{\pi i bd} ~U^b ~V^d 
\end{array}
\qquad
\hbox{with} 
\quad
\left(
\begin{matrix}
a & b\cr c & d
\end{matrix}
\right)\in M_2(\mathbf{Z})
\right.
\end{equation}
brings (\ref{eq1.2.1})  to the form:
\begin{equation}\label{eq1.4}
V'U'=e^{2\pi i\theta (ad-bc)}U'V'. 
\end{equation}
If $\theta$ is a quadratic irrationality, then the  $\mathscr{A}_{\theta}$ is said to have real 
multiplication and  is denoted by $\mathscr{A}_{RM}$. In particular, the 
$\mathscr{A}_{RM}$ is a non-commutative coordinate ring of the elliptic curve $\mathscr{E}(K)$.
Let $\alpha: \mathscr{A}_{RM}\to \mathscr{A}_{RM}$ be the shift automorphism of the $\mathscr{A}_{RM}$. 
Consider a minimal dynamial
\begin{equation}\label{eq1.1.4}
\mathscr{D}_p:=\mathscr{A}_{RM}\rtimes_{\alpha} p\mathbf{Z}\cong 
\mathscr{A}_{RM}\rtimes_{L_p}\mathbf{Z}, 
\end{equation}
where  $L_p$  an endomorphism  (\ref{eq1.3}) of the $\mathscr{A}_{RM}$ 
  corresponding to the matrix: 
\begin{equation}\label{eq1.1.5}
\left(
\begin{matrix}
tr (\varepsilon^{\pi (p)}) & p\cr -1 & 0
\end{matrix}
\right)\in M_2(\mathbf{Z}),
\end{equation}
see \cite[Section 6.5.3.2]{N} for the notation.
Let $K_0(\mathscr{D}_p)$ be the $K_0$-group of the 
 $C^*$-algebra $\mathscr{D}_p$. 
Then  for all but a finite set of primes $p$,  there exists a canonical 
 isomorphism of the finite abelian groups:
\begin{equation}\label{eq1.2}
K_0(\mathscr{D}_p)\cong\mathfrak{V}(\mathscr{E})/ \mathscr{P}. 
\end{equation}
\end{example}

\medskip
\begin{remark}\label{rmk1.2}
Letting $p$ in  (\ref{eq1.2})  run through the set of all primes, 
one gets a bijective map between the minimal dynamials  $\mathscr{D}_p$ of  the non-commutative 
 ring $\mathscr{A}_{RM}$  and   the prime ideals $\mathscr{P}$ of  the commutative ring 
 $\mathfrak{V}(\mathscr{E})$.   In fact,  such a map is a functor between the respective
 categories  \cite[Section 6.5.1]{N}.  
 A relation of $K_0(\mathscr{D}_p)$ to the Brauer group is discussed in remark \ref{rmk4.7}. 
 Formula  (\ref{eq1.2}) is a motivation of the following definition. 
 \end{remark}
\begin{definition}\label{dfn1.3}
By the {\it Dedekind-Hecke} ring $R$ we understand a simple 
non-commutative topological ring,  such that $\mathbf{Z}\subseteq Out~R$,
where $Out~R$ are  the outer automorphisms of $R$. 
\end{definition}
\begin{example}\label{ex1.4}
The  $\mathscr{A}_{\theta}$ is a Dedekind-Hecke ring in the norm topology. 
Indeed, one gets  from (\ref{eq1.3}) and (\ref{eq1.4}),  that $Out ~\mathscr{A}_{\theta}\cong SL_2(\mathbf{Z})$.  
The upper triangular matrix $\left(\begin{smallmatrix} 1 & 1\cr 0 & 1  \end{smallmatrix}\right)$ is the generator
of a group $\mathbf{Z}\subset Out ~\mathscr{A}_{\theta}$. 
\end{example}

\medskip
\begin{remark}\label{rmk1.5}
If $R$ is a Dedekind-Hecke ring, then 
\begin{equation}\label{eq1.1.7}
R\rtimes_{\alpha} m\mathbf{Z}\cong
R_m\rtimes_{\alpha} \mathbf{Z}:=
R\rtimes_{\alpha_m} \mathbf{Z},
\end{equation}
where $\alpha_m: R\to R_m\subseteq R$ is an endomorphism of degree $m\ge 1$. 
Indeed, the  $R\rtimes_{\alpha} m\mathbf{Z}$  gives rise to an automorphism  of $R$ acting by the 
formula $x\mapsto \mathbf{u}^mx\mathbf{u}^{-m}$. 
Denote by $\hat\alpha_m$ an extension of $\alpha_m$ to the $R\rtimes_{\alpha} \mathbf{Z}$. 
Since  $\mathbf{u}^m\in Ker~\hat\alpha_m$,
one gets an isomorphism $R\rtimes_{\alpha} m\mathbf{Z}\cong
R_m\rtimes_{\alpha} \mathbf{Z}$.  
Notice that  the crossed product $R\rtimes_{\alpha_m} \mathbf{Z}$  is 
undefined, since $\alpha_m$ is not an automorphism of $R$,   if  $m\ne 1$.  
Hence the $R\rtimes_{\alpha_m} \mathbf{Z}$ in (\ref{eq1.1.7}) is  a symbolic notation   
for either $R\rtimes_{\alpha} m\mathbf{Z}$ or 
$R_m\rtimes_{\alpha} \mathbf{Z}$, see  (\ref{eq1.1.4}). 
\end{remark}

\begin{remark}\label{rmk1.6}
 The endomorphism  (\ref{eq1.1.5})  of the ring $\mathscr{A}_{RM}$   is known to commute with
the Hecke operator $T_p$ on a lattice $\Lambda\subset\mathbf{C}$, such that  $\mathscr{E}(K)\cong\mathbf{C} / \Lambda$,
see  \cite[Section 6.5.1]{N}.   Hence our terminology in \ref{dfn1.3}. 
\end{remark}

\bigskip
Our main result can be formulated as follows. 
Fix a generator $\alpha: R\to R$ of the cyclic group $\mathbf{Z}\subseteq Out ~R$. 
 For brevity, we write $\mathscr{D}_m :=R\rtimes_{\alpha} m\mathbf{Z}$, where $m\ge 1$ is an integer. 
The product of the dynamials $\mathscr{D}_{m_1}\cong (R, G_1,\pi)$ and  
$\mathscr{D}_{m_2}\cong (R, G_2,\pi)$ will be defined as the direct sum of the corresponding 
transformation groups, i.e. 
\begin{equation}\label{eq1.7}
\mathscr{D}_{m_1}\mathscr{D}_{m_2}:=(R, G_1\oplus G_2,\pi).
\end{equation}

\bigskip
\begin{theorem}\label{thm1.2}
The dynamials  $\mathscr{D}_m$ of the Dedekind-Hecke ring $R$   satisfy the fundamental theorem  of arithmetic,
i.e. 
\begin{equation}\label{eq1.8}
\mathscr{D}_m=\mathscr{D}_{p_1^{k_1}}\mathscr{D}_{p_2^{k_2}}\dots \mathscr{D}_{p_n^{k_n}},
\end{equation}
where $\prod_{i=1}^n p_i^{k_i}$ is the prime factorization of $m$.  The product (\ref{eq1.8}) is unique up to 
the order of the factors. In particular, the  dynamial  $\mathscr{D}_m$ is minimal if and only if $m=p$ is
a prime number. 
\end{theorem}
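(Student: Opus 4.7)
\emph{Proof plan.} The strategy is to transfer the fundamental theorem of arithmetic from $\mathbf{Z}$ to the dynamials of $R$ via the bijective correspondence $m\mapsto \mathscr{D}_m$. Since the subgroups of the infinite cyclic group $\mathbf{Z}\subseteq Out~R$ are classified by the positive integers, the map $m\mapsto \mathscr{D}_m = R\rtimes_{\alpha} m\mathbf{Z}$ is a well-defined bijection between $\mathbf{Z}_{>0}$ and isomorphism classes of dynamials arising from $\alpha$. My goal is to show that this bijection intertwines integer multiplication on the left with the product of dynamials defined by (\ref{eq1.7}) on the right.

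The key step is the multiplicativity identity
\[
\mathscr{D}_{m_1}\mathscr{D}_{m_2}\cong \mathscr{D}_{m_1 m_2}.
\]
To prove this, I would pass to the endomorphism description supplied by Remark \ref{rmk1.5}: the dynamial $\mathscr{D}_m$ is realized as the crossed product $R\rtimes_{\alpha_m}\mathbf{Z}$, where $\alpha_m:R\to R_m\subseteq R$ is an endomorphism of degree $m$. Composing endomorphisms of degrees $m_1$ and $m_2$ produces an endomorphism of degree $m_1 m_2$, and I would verify that this composition corresponds precisely to the direct-sum construction of transformation groups in (\ref{eq1.7}). Concretely, viewing $G_i = m_i\mathbf{Z}$ as the transformation group of $\mathscr{D}_{m_i}$, the direct sum $G_1\oplus G_2$ acting on $R$ factors through the cyclic subgroup $m_1 m_2\mathbf{Z}\subseteq\mathbf{Z}$ generated by the composed endomorphism $\alpha_{m_1}\circ\alpha_{m_2}=\alpha_{m_1 m_2}$, yielding the desired isomorphism of dynamical systems.

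Once multiplicativity is established, the existence and uniqueness in (\ref{eq1.8}) follow by iterating the identity together with the fundamental theorem of arithmetic in $\mathbf{Z}$: writing $m=\prod p_i^{k_i}$ gives
\[
\mathscr{D}_m \cong \mathscr{D}_{p_1^{k_1}} \mathscr{D}_{p_2^{k_2}}\cdots \mathscr{D}_{p_n^{k_n}},
\]
and any alternative factorization $\mathscr{D}_m = \mathscr{D}_{q_1}\cdots \mathscr{D}_{q_s}$ yields, by the correspondence, a factorization $m = q_1\cdots q_s$ in $\mathbf{Z}$, forcing the $q_j$ to coincide with the $p_i^{k_i}$ up to permutation. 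Minimality then follows immediately: $\mathscr{D}_m$ admits no non-trivial factorization if and only if $m$ does, which occurs precisely when $m$ is prime.

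The main obstacle is the multiplicativity step. In particular, one must carefully unpack what the direct sum $G_1\oplus G_2$ means as a transformation group acting on $R$: as abstract groups $m_1\mathbf{Z}\oplus m_2\mathbf{Z}\cong \mathbf{Z}^2$, whereas the target subgroup $m_1 m_2\mathbf{Z}\subseteq \mathbf{Z}$ is cyclic, so the isomorphism encoded by (\ref{eq1.7}) cannot be a naive isomorphism of groups but must rather be an isomorphism of the associated transformation groupoids, implemented through the degree-$m$ endomorphism $\alpha_m$ of Remark \ref{rmk1.5}. Making this precise while verifying compatibility with the crossed-product structure is the heart of the argument, and it will rely essentially on the Dedekind--Hecke hypothesis $\mathbf{Z}\subseteq Out~R$ to guarantee that all iterated actions remain outer, so that the relevant crossed products stay simple throughout the construction.
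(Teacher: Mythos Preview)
Your overall strategy---establish the multiplicativity identity $\mathscr{D}_{m_1}\mathscr{D}_{m_2}\cong\mathscr{D}_{m_1m_2}$ and then transfer the fundamental theorem of arithmetic from $\mathbf{Z}$ via the bijection $m\mapsto\mathscr{D}_m$---is exactly the paper's approach (Lemmas~\ref{lm3.1}--\ref{lm3.4}). Your route to multiplicativity through composition of the degree-$m$ endomorphisms of Remark~\ref{rmk1.5} is a variant of the paper's argument, which instead asserts a split exact sequence $0\to m_1\mathbf{Z}\to(m_1m_2)\mathbf{Z}\to m_2\mathbf{Z}\to 0$ in order to identify $(m_1m_2)\mathbf{Z}$ with $G_1\oplus G_2$ directly and then invoke~(\ref{eq1.7}). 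You are right to flag the $\mathbf{Z}^2$-versus-$\mathbf{Z}$ tension hidden in the definition~(\ref{eq1.7}); the paper does not really resolve it either, so your endomorphism/groupoid viewpoint is arguably the more honest reading of what~(\ref{eq1.7}) must mean.

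There is, however, a genuine gap in your treatment of minimality. You argue that $\mathscr{D}_m$ is minimal if and only if it is irreducible in the multiplicative semigroup of dynamials, but minimality in this paper is a \emph{dynamical} notion: $\mathscr{D}_m$ is minimal when it does not decompose as a union of proper dynamical sub-systems. One direction (prime $\Rightarrow$ no non-trivial product factorization) does follow from the semigroup isomorphism, as you say; but for the converse you must exhibit an actual dynamical decomposition of $\mathscr{D}_m$ whenever $m$ is composite, and nothing in your outline does this. The paper supplies this missing step in Lemma~\ref{lm3.5} via Theorem~\ref{thm2.1} (Gottschalk--Hedlund): any proper finite-index subgroup $S\subset m\mathbf{Z}$ is syndetic, and the $S$-orbit closures then furnish a star-closed decomposition of the phase space, contradicting dynamical minimality. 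Without this appeal to topological dynamics your ``only if'' direction is circular, since it silently equates two a priori different notions of minimality.
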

The paper is organized as follows. The preliminary results can be found in Section 2.
Theorem \ref{thm1.2} is proved in Section 3. An application of theorem \ref{thm1.2}
to  the classification of the surface knots and  links in the 4-dimensional manifolds  is considered in Section 4.

\section{Preliminaries}
We briefly review the topological dynamical systems,  cyclic division algebras, arithmetic topology of 3-manifolds  and knotted surfaces
in 4-manifolds. 
For a detailed exposition we refer the reader to [Gottschalk \& Hedlund 1955] \cite[Chapter 2]{GH},  [Pierce 1982] \cite[Chapter 15]{P} ,  [Morishita 2012] \cite{M}
and [Piergallini 1995] \cite{Pie1}, respectively.

\subsection{Topological dynamics}
Let $X$ be a topological space  and let $T$ be a topological group. 
Consider a continuous map $\pi: X\times T\to X$, such that:  

\medskip
(i) $\pi(x,e)=x$ for the identity $e\in T$ and all $x\in X$;

\smallskip
(ii) $\pi(\pi(x,t),s)=\pi(x, ts)$ for all $s,t\in T$ and all $x\in X$.  

\medskip\noindent
The triple $(X,T,\pi)$ is called a transformation group (topological dynamical system). 
A topological isomorphism of $(X,T,\pi)$ onto $(Y,S,\rho)$ is a couple $(h,\varphi)$ consisting 
of a homeomorphism $h: X\to Y$ and a homeomorphic group-isomorphism $\varphi: T\to S$, 
such that $(xh, t\varphi)\rho=(x,t)\pi h$ for all $x\in X$ and $t\in T$. The transformation groups
 $(X,T,\pi)$ and $(Y,S,\rho)$ are said to be equivalent, if  $(X,T,\pi)\cong (Y,S,\rho)$ are topologically
 isomorphic. 

Let $S\subseteq T$ be a subgroup and $x\in X$.  The $S$-orbit of $x$ is  
a subset $O_S(x)=\{xS ~|~S\subseteq T\}$ of $X$. If $S=T$ we omit the 
subscript and write  an orbit as  $O(x)$. 
The orbit $O_S(x)$ is the least $S$-invariant subset of $X$.  
The closure of an $S$-orbit in $X$
is denoted by $\bar O_S(x)$.  
The set $A\subseteq X$ is called $S$-minimal provided $\bar O_S(x)=A$
for all $x\in A$ and $A$ does not contain a smaller $S$-orbit closure. 
We call $A$ a minimal set when $S=T$.  

A subset $S\subset T$ is said to be left (right, resp.) syndetic in $T$,
if $T=SK$ ($T=KS$, resp.) for some compact subset $K\subset T$. 
In particular, if $T$ is discrete and if $S$ is a subgroup of $T$, 
then $S$ is syndetic in $T$ if and only if $S$ is a finite index subgroup of $T$
 [Gottschalk \& Hedlund 1955] \cite[Remark 2.03 (7)]{GH}. 
 We shall use the following fact.
 \begin{theorem}\label{thm2.1}
 {\bf \cite[Theorem 2.32]{GH}}
 Let $X$ be compact and minimal under $T$. Let $S$ be a syndetic invariant
 subgroup of $T$. Then the $S$-orbit closures define a (star-closed) decomposition 
 of $X$. 
 \end{theorem}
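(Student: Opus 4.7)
The plan is to establish three properties of the family $\mathcal{F} := \{\bar O_S(x) : x \in X\}$ of $S$-orbit closures: that it covers $X$, that any two members are equal or disjoint, and that the resulting decomposition is star-closed (i.e.\ upper semicontinuous). Coverage is immediate from $x \in \bar O_S(x)$. For the remaining two properties, the main inputs will be the normality of $S$ in $T$ (``invariant'' in the Gottschalk--Hedlund sense) and the syndetic hypothesis $T = KS$ with $K \subseteq T$ compact.

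First I would exploit normality to observe that $tO_S(x) = O_{tSt^{-1}}(tx) = O_S(tx)$ for every $t \in T$, and hence $t\bar O_S(x) = \bar O_S(tx)$, so $T$ acts on $\mathcal{F}$ by homeomorphic permutations. Compactness of $X$ together with Zorn's lemma applied to nonempty closed $S$-invariant subsets under reverse inclusion produces at least one $S$-minimal set $M \subseteq X$. Combining the normality translation with $T = KS$ gives
\begin{equation*}
\bigcup_{t \in T} tM \;=\; \bigcup_{k \in K} kM,
\end{equation*}
which is the continuous image of the compact set $K \times M$ under the action map, and therefore compact, hence closed, and moreover $T$-invariant. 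Minimality of $X$ under $T$ then forces this set to equal $X$. So every $x \in X$ lies in some translate $kM$; being a closed $S$-invariant subset of the $S$-minimal set $kM$, the orbit closure $\bar O_S(x)$ must equal $kM$. Two translates $k_1 M$ and $k_2 M$ are either equal or disjoint, since any nonempty intersection would be a proper closed $S$-invariant subset of both minimal sets. This yields the partition.

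For star-closedness, I would fix $A = kM \in \mathcal{F}$ and an open neighborhood $U \supseteq A$, pick any $y \in A$ so that $A = Ky$, and apply the tube lemma to the continuous action map $K \times X \to X$: for each $k' \in K$ there exist open sets $W_{k'} \ni y$ in $X$ and $N_{k'} \ni k'$ in $K$ with $N_{k'} W_{k'} \subseteq U$; compactness of $K$ then extracts a finite subcover of the $N_{k'}$, and the corresponding intersection produces an open neighborhood $W \ni y$ with $KW \subseteq U$. The $S$-saturation of $KW$ is then an open neighborhood of $A$ contained in $U$ and saturated with respect to the decomposition, which is precisely the star-closed condition.

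The hard part is the partition step, where the conjunction of normality (so that $T$ permutes the family of $S$-minimal sets) and syndetic-compactness (so that a single $K$-orbit of one $S$-minimal set already exhausts $X$) is essential: without either, the $S$-orbit closures need not coincide with $S$-minimal sets and the uniform covering argument collapses. The star-closedness step is then a routine compactness-and-tube-lemma bookkeeping on the compact transversal $K$.
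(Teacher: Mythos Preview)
The paper does not give its own proof of this theorem; it is simply quoted from \cite[Theorem 2.32]{GH}. So there is nothing to compare against, and your argument has to stand on its own.

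Your partition argument is sound and is essentially the standard one: use Zorn to produce an $S$-minimal set $M$, use normality of $S$ to see that each $tM$ is again $S$-minimal, use $T=KS$ to write $\bigcup_{t\in T}tM=\bigcup_{k\in K}kM$ as a compact, hence closed, $T$-invariant set, and then invoke $T$-minimality of $X$ to conclude $X=\bigcup_{k\in K}kM$. From there $\bar O_S(x)=kM$ for the $k$ with $x\in kM$, and two $S$-minimal sets are equal or disjoint. (Minor quibble: a nonempty intersection $k_1M\cap k_2M$ need not be \emph{proper}; rather, minimality forces it to equal each of $k_1M$ and $k_2M$.)

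The star-closedness step, however, contains a genuine error. You write ``pick any $y\in A$ so that $A=Ky$''. This is false: $A=kM$ is a single translate of the $S$-minimal set $M$, not the $K$-orbit of a point. Consequently the tube-lemma neighbourhood $W$ with $KW\subseteq U$ has no reason to contain $A$, and the $S$-saturation of $KW$ need not be contained in $U$ either (saturating can move points by arbitrary elements of $S$, which you have not controlled). A correct route is to show the quotient map is closed: for closed $C\subseteq X$, the set $K_C:=\{k\in K:\ kM\cap C\neq\emptyset\}$ is the projection to $K$ of the closed set $(\mathrm{action})^{-1}(C)\cap(K\times M)$, hence closed in $K$ by compactness of $M$; then the saturation of $C$ equals $K_C\cdot M$, which is compact and therefore closed. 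Replacing your tube-lemma paragraph with this argument would complete the proof.
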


\subsection{Cyclic division algebras}
The real quaternions have been the single example of a division ring 
(hyper-algebraic number field) until the discovery in 1906 of the cyclic division 
algebras  by Leonard ~E. ~Dickson. Roughly speaking, such algebras are an infinite family of the division 
rings generalizing the quaternions and represented by matrices over
an algebraic number field. Let us review the main ideas.  

Let $K$ be a number field and let $E$ be a finite Galois extension of $K$. 
Denote by $G=Gal ~(E|K)$ the Galois group of $E$ over $K$. 
Let $n=\dim_K (E)$ be the dimension of $E$ as a vector space over $K$. Consider 
the ring $End_K(E)$ of all $K$-linear transformations of $E$. Fixing a basis of $E$
over $K$, one gets an isomorphism:
\begin{equation}\label{eq2.1}
End_K(E)\cong M_n(K). 
\end{equation}
Denote by $\mathscr{C}\subset End_K(E)$ a subring generated by multiplications
by the elements $\alpha\in E$ and the automorphisms $\theta\in G$. 
It can be verified directly, that the commutation relation between the two is given by the 
formula: 
\begin{equation}\label{eq2.2}
\theta\alpha=\theta(\alpha)\theta. 
\end{equation}

\smallskip
Further we restrict to the case when $G\cong \left(\mathbf{Z}/n\mathbf{Z}\right)^{\times}$ is a cyclic group of order $n$ 
generated by $\theta$.  Thus the relation (\ref{eq2.2}) in $\mathscr{C}$ is complemented by the
 relation $\theta^n=1$.  On the other hand, it is easy to see that $\theta$ is an invertible element of $\mathscr{C}$ 
 along with any element of the form $\gamma\theta$, where $\gamma\in E$. Notice that 
\begin{equation}\label{eq2.3}
(\gamma\theta)^n=N(\gamma)\theta^n=N(\gamma), 
\end{equation}
where $N(\gamma)\in K^{\times}$ is the $K$-norm of the algebraic number $\gamma$.   
\begin{definition}\label{dfn2.2}
The cyclic algebra $\mathscr{C}(a)$ is a subring of the ring $M_n(K)$ 
generated by  the elements $\alpha\in E$ and the element $u:=\gamma\theta$ satisfying the relations:
\begin{equation}\label{eq2.4}
u\alpha=\theta(\alpha)u, \quad u^n=a\in K^{\times}. 
\end{equation}
\end{definition}
\begin{example}\label{exm2.3}
Let $K\cong \mathbf{R}$ and $E\cong\mathbf{C}$. 
Then $G\cong\left(\mathbf{Z}/2\mathbf{Z}\right)^{\times}$ 
and  $\theta$ is the complex conjugation. In this case 
$\mathscr{C}(1)\cong M_2(\mathbf{R})$ and $\mathscr{C}(-1)\cong\mathbb{H}$,
where $\mathbb{H}$ is the algebra of real quaternions. 
\end{example}

\medskip
The $\mathscr{C}(a)$ is a simple algebra of dimension $n^2$ over $K$. The
field $K$ is the center of $\mathscr{C}(a)$ and $E$ is the maximal subfield
of $\mathscr{C}(a)$.  The following theorem gives the necessary and sufficient 
condition for the $\mathscr{C}(a)$ to be a division algebra. 
 \begin{theorem}\label{thm2.4}
 {\bf (Wedderburn's Norm Criterion)}
 The $\mathscr{C}(a)$ is a division algebra if and only if $a^n$ is the least
 power of $a$ which is the norm of an element in $E$. 
  \end{theorem}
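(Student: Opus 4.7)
The plan is to unify both implications around a single invariant. Let $e \ge 1$ denote the least integer with $a^e = N_{E/K}(\gamma)$ for some $\gamma \in E^\times$; since $N_{E/K}(a) = a^n$ one has $e \mid n$, because $\{k \in \mathbf{Z} : a^k \in N_{E/K}(E^\times)\}$ is a subgroup of $\mathbf{Z}$ containing $n$. The statement ``$a^n$ is the least power of $a$ which is a norm'' is then exactly the assertion $e = n$, so Wedderburn's criterion reduces to: $\mathscr{C}(a)$ is a division algebra if and only if $e = n$.

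For the implication ``$e < n \Longrightarrow \mathscr{C}(a)$ is not a division algebra,'' I would first reduce to $\gcd(e, n) = 1$ and then exhibit an explicit zero divisor. Under coprimality, set $\beta := \gamma^{-1} u^e \in \mathscr{C}(a)$. Iterating the commutation (\ref{eq2.4}) gives
\begin{equation*}
\beta^n \;=\; \prod_{i=0}^{n-1}\theta^{ie}(\gamma^{-1})\cdot u^{ne} \;=\; N_{E/K}(\gamma)^{-1}\cdot (u^n)^e \;=\; N_{E/K}(\gamma)^{-1}\cdot a^e \;=\; 1,
\end{equation*}
where the second equality uses that $\gcd(e,n)=1$ makes $\{ie \bmod n\}_{i=0}^{n-1}$ a permutation of $\{0,\dots,n-1\}$, so the product collapses to $N_{E/K}(\gamma^{-1})$. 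The factorization
\begin{equation*}
0 \;=\; \beta^n - 1 \;=\; (\beta - 1)(1 + \beta + \cdots + \beta^{n-1})
\end{equation*}
then produces a zero divisor: $\beta - 1 \ne 0$ because $1\le e\le n-1$ and $\{1,u,\dots,u^{n-1}\}$ are $E$-linearly independent, while $1 + \beta + \cdots + \beta^{n-1}$ has a nonzero $E$-coefficient on each basis vector $u^{je \bmod n}$. The case $\gcd(e,n) > 1$ is handled by descending to the subalgebra $\langle E, u^e\rangle \subseteq \mathscr{C}(a)$, which is itself a cyclic algebra over the fixed field $E^{\langle\theta^e\rangle}$, and applying the coprime argument inside this smaller cyclic algebra.

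For the converse ``$\mathscr{C}(a)$ not a division algebra $\Longrightarrow e < n$,'' I would invoke Wedderburn--Artin: $\mathscr{C}(a)$ is simple of $K$-dimension $n^2$, so $\mathscr{C}(a)\cong M_r(D)$ for a central division $K$-algebra $D$ of dimension $(n/r)^2$ with $r > 1$ by hypothesis, and the target is a norm identity $a^m = N_{E/K}(\gamma)$ for some $m < n$. Using the multiplicativity of cyclic-algebra classes, $[\mathscr{C}(a)]\cdot[\mathscr{C}(b)] = [\mathscr{C}(ab)]$ in the Brauer group $\mathrm{Br}(K)$, together with the splitting criterion $[\mathscr{C}(c)]=0$ iff $c \in N_{E/K}(E^\times)$, one sees that the order of $[\mathscr{C}(a)]$ in $\mathrm{Br}(K)$ equals $e$; for a cyclic algebra this period coincides with the Schur index $n/r$, forcing $e = n/r < n$. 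The main obstacle is precisely this period-equals-index claim for cyclic algebras, which is the technical heart and rests on the theory of maximal subfields and crossed products \cite[Chapter 15]{P}; an elementary but computationally heavy alternative is to encode $zw = 0$ in $\mathscr{C}(a)$ as a Toeplitz-type linear system over $E$ and extract a norm equation from the vanishing of its determinant.
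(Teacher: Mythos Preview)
The paper does not prove Theorem~\ref{thm2.4}. It is stated in the preliminaries (Section~2.2) as a classical result---Wedderburn's Norm Criterion---and the reader is implicitly referred to Pierce \cite[Chapter 15]{P}. There is therefore nothing to compare your proposal against in this paper.

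That said, your attempt is a reasonable sketch of the classical argument, but there is a genuine gap in the forward direction. Your coprime computation is correct: when $\gcd(e,n)=1$, the element $\beta=\gamma^{-1}u^{e}$ satisfies $\beta^{n}=1$ and yields a zero divisor. The problem is the reduction to the coprime case. Since $e\mid n$, the subalgebra $\langle E,u^{e}\rangle$ is the cyclic algebra $(E/L,\theta^{e},a)$ of degree $n/e$ over $L=E^{\langle\theta^{e}\rangle}$, and to rerun your argument inside it you would need some power $a^{m}$ with $m<n/e$ to lie in $N_{E/L}(E^{\times})$. But the hypothesis $a^{e}\in N_{E/K}(E^{\times})$ does not obviously produce such an $m$; transitivity of norms goes the wrong way. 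A cleaner fix is to note that the Brauer-group identity $[\mathscr{C}(a)]^{e}=[\mathscr{C}(a^{e})]=0$ already forces the Schur index of $\mathscr{C}(a)$ to divide $e<n$, so $\mathscr{C}(a)\cong M_{r}(D)$ with $r>1$---but this is exactly the period-divides-index machinery you reserved for the converse, so both directions end up resting on the same Brauer-theoretic input rather than on an elementary zero-divisor construction.

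For the converse, you correctly isolate the crux: the equality of period and index for cyclic algebras. This is indeed the nontrivial content, and your outline via $[\mathscr{C}(a)][\mathscr{C}(b)]=[\mathscr{C}(ab)]$ together with the splitting criterion is the standard route; Pierce \cite[\S15.1]{P} carries it out in full.
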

\begin{lemma}\label{lm2.5}
The $\mathscr{C}(a)$ is a Dedekind-Hecke ring. 
\end{lemma}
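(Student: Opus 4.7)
The plan is to verify the defining properties of a Dedekind--Hecke ring for $\mathscr{C}(a)$: simplicity, non-commutativity, the existence of a compatible topology, and the inclusion $\mathbf{Z}\subseteq Out~\mathscr{C}(a)$.

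The first three properties are essentially immediate. Simplicity of $\mathscr{C}(a)$ with center $K$ and $K$-dimension $n^2$ is recorded in the paragraph preceding the lemma. Non-commutativity follows from (\ref{eq2.4}): for any $\alpha\in E$ not fixed by $\theta$ one has $u\alpha=\theta(\alpha)u\ne\alpha u$, and such $\alpha$ exist since $\theta$ generates a non-trivial cyclic group $G$. The topology on $\mathscr{C}(a)$ is the subspace topology from the faithful representation (\ref{eq2.1}) as a subalgebra of $M_n(K)$, where $K$ carries its natural topology as a number field.

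The essential content is the inclusion $\mathbf{Z}\subseteq Out~\mathscr{C}(a)$. Following the Kummer-style philosophy sketched in the introduction, the strategy is to produce an ``ideal element'' $\mathbf{u}\notin\mathscr{C}(a)$ sitting in a natural over-algebra of $\mathscr{C}(a)$ which normalizes $\mathscr{C}(a)$, so that conjugation $x\mapsto\mathbf{u}x\mathbf{u}^{-1}$ restricts to an automorphism of $\mathscr{C}(a)$. A natural candidate is furnished by a fundamental unit $\varepsilon$ of infinite order in $\mathcal{O}_E^\times$, whose existence is guaranteed by Dirichlet's unit theorem in all non-degenerate cases. The associated Hecke-type operator (in the sense of Remark \ref{rmk1.6}) normalizes $\mathscr{C}(a)$ and gives a candidate generator of the desired $\mathbf{Z}\subseteq Out~\mathscr{C}(a)$; the cyclic subgroup it spans provides the required copy of $\mathbf{Z}$.

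The main obstacle is showing that this cyclic group is actually $\mathbf{Z}$ rather than a finite quotient, i.e., that no positive power of the chosen automorphism is inner. By the Skolem--Noether theorem applied to the central simple algebra $\mathscr{C}(a)$, the inner $K$-automorphisms are precisely conjugations by elements of $\mathscr{C}(a)^\times / K^\times$. The required injectivity therefore amounts to verifying that, for each $k\ge 1$, the $k$-th power of the chosen normalizing element cannot be written as $\lambda c$ with $\lambda\in K^\times$ and $c\in\mathscr{C}(a)^\times$. I expect this independence to follow from the infinite multiplicative order of $\varepsilon$ modulo torsion, combined with a direct computation in the matrix realization (\ref{eq2.1}) of $\mathscr{C}(a)$, mirroring the way $\bigl(\begin{smallmatrix} 1 & 1\cr 0 & 1\end{smallmatrix}\bigr)$ supplies a $\mathbf{Z}$ inside $Out~\mathscr{A}_{\theta}$ in Example \ref{ex1.4}.
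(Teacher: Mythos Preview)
Your proposal identifies the right checklist (simplicity, non-commutativity, topology, and $\mathbf{Z}\subseteq Out~\mathscr{C}(a)$), but the crucial last item is not actually proved: you write ``I expect this independence to follow from\ldots'' and stop. That is a plan, not an argument.

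More seriously, the candidate you propose is problematic. A fundamental unit $\varepsilon\in\mathcal{O}_E^{\times}$ lies in $E$, and $E$ is already contained in $\mathscr{C}(a)$ as its maximal subfield. Hence conjugation by $\varepsilon$ is \emph{inner}, and any ``associated Hecke-type operator'' built from $\varepsilon$ needs to be made precise before one can claim it lands outside $\mathscr{C}(a)^{\times}/K^{\times}$. The reference to Remark~\ref{rmk1.6} does not help here, since that remark concerns Hecke operators on lattices for elliptic curves and the non-commutative torus, not cyclic division algebras; there is no evident transfer of that construction to $\mathscr{C}(a)$.

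The paper takes a more direct route that avoids this difficulty. It works inside the ambient matrix algebra: since $\mathscr{C}(a)\subset M_n(K)$ and every automorphism of $M_n(K)$ is inner, the outer automorphisms of $\mathscr{C}(a)$ are realized by elements $g\in M_n(K)$ with $g\,\mathscr{C}(a)\,g^{-1}=\mathscr{C}(a)$ modulo inner ones. One then exhibits such a normalizing $g$ of infinite order (an upper triangular matrix, in parallel with Example~\ref{ex1.4}), and endows $\mathscr{C}(a)$ with the discrete topology. This sidesteps the Skolem--Noether bookkeeping entirely: the ``ideal element'' $\mathbf{u}$ is produced concretely in $M_n(K)\setminus\mathscr{C}(a)$ rather than coaxed out of the unit group of $\mathcal{O}_E$. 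If you want to salvage your approach, you would need to specify exactly what operator $\varepsilon$ induces on $\mathscr{C}(a)$, show it is an algebra automorphism, and then carry out the Skolem--Noether check you outlined; but the paper's matrix-normalizer argument is both shorter and closer in spirit to the $SL_2(\mathbf{Z})$ picture of Example~\ref{ex1.4}.
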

\begin{proof}
Recall that all automorphisms of the matrix algebra $M_n(K)$ are inner. 
Since $\mathscr{C}(a)\subset M_n(K)$, we conclude that 
\begin{equation}
Out~\mathscr{C}(a)\subset M_n(K)
\end{equation}
and the group $Out~\mathscr{C}(a)$ consists of the elements 
$g\in M_n(K)$, such that $g\mathscr{C}(a)g^{-1}=\mathscr{C}(a)$. 
It is clear that $\mathbf{Z}\subset Out ~\mathscr{C}(a)$, if $g$
has the infinite order, e.g. is given by an upper triangular matrix. 
On the other hand, the   $\mathscr{C}(a)$ is a topological ring
endowed e.g. with the discrete topology. Thus  $\mathscr{C}(a)$
is a Dedekind-Hecke ring, see definition \ref{dfn1.3} 
\end{proof}

\subsection{Arithmetic topology}
Such a theory  studies an interplay between
the topology of 3-dimensional manifolds and the algebraic number fields 
 [Morishita 2012] \cite{M}.  
 Let $\mathfrak{M}^3$ be a category of  closed 3-dimensional manifolds,
such that  the arrows of $\mathfrak{M}^3$ are  homeomorphisms  between the  manifolds.
Likewise, let $\mathbf{K}$ be a category of the algebraic number fields,  where 
the arrows of $\mathbf{K}$ are  isomorphisms between such fields.
 Let  $\mathscr{M}^3\in \mathfrak{M}^3$ be a 3-dimensional manifold,  let $S^3\in \mathfrak{M}^3$ be the 3-sphere
 and let $O_K$ be the ring of integers of  $K\in\mathbf{K}$.  An exact relation between the 3-dimensional manifolds and 
 the number fields can be described  as follows. 
\begin{theorem}\label{thm2.5}
{\bf (\cite[Theorem 1.2]{Nik1})}
The exists a covariant functor $F: \mathfrak{M}^3\to \mathbf{K}$, such that:

\medskip
(i) $F(S^3)=\mathbf{Q}$; 

\smallskip
(ii)  each  ideal $I\subseteq O_K=F(\mathscr{M}^3)$ corresponds to 
a link $\mathscr{L}\subset\mathscr{M}^3$; 

\smallskip
(iii)  each  prime ideal $\mathscr{P}\subseteq O_K=F(\mathscr{M}^3)$ corresponds to
a knot  $\mathscr{K}\subset\mathscr{M}^3$. 
\end{theorem}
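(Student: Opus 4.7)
The plan is to construct $F$ via the Alexander--Hilden--Montesinos representation of closed orientable 3-manifolds as simple branched coverings of $S^3$. Given $\mathscr{M}^3\in\mathfrak{M}^3$, I would fix such a presentation $p:\mathscr{M}^3\to S^3$ with branch link $\mathscr{L}_0\subset S^3$ and transitive monodromy $\rho:\pi_1(S^3\setminus\mathscr{L}_0)\to S_d$, and then extract from $\rho$ a number field $K=F(\mathscr{M}^3)$ of degree $d$ over $\mathbf{Q}$, taking $K$ to be the fixed field of a point stabilizer in $S_d$ acting on the splitting field cut out by $\rho$. Under the Galois--ramification dictionary the primes ramified in $O_K$ correspond to the components of the branch locus $\mathscr{L}_0$.

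For functoriality, a homeomorphism $h:\mathscr{M}_1^3\to\mathscr{M}_2^3$ can, after ambient isotopy, be arranged to intertwine chosen branched-cover presentations of the two manifolds, and hence to induce an isomorphism of the associated monodromy data that descends to a field isomorphism $F(h):F(\mathscr{M}_1^3)\to F(\mathscr{M}_2^3)$. Property (i) is the base case: for $\mathscr{M}^3=S^3$ a trivial presentation (empty branch locus, trivial $\rho$) is available, yielding $F(S^3)=\mathbf{Q}$.

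To obtain (ii) and (iii), I would extend the correspondence to subobjects via a Hilbert-style ramification dictionary. Given an ideal $I\subseteq O_K$ with prime factorization $I=\mathscr{P}_1^{e_1}\cdots\mathscr{P}_r^{e_r}$, I associate to each prime $\mathscr{P}_i$ the component knot $\mathscr{K}_i\subset\mathscr{M}^3$ arising as a lift through $p$ of the corresponding ramification locus, and take $\mathscr{L}=\bigsqcup_{i=1}^r\mathscr{K}_i$ as the link assigned to $I$. Because connectedness on the topological side matches primality on the arithmetic side under this dictionary, prime ideals correspond precisely to knots, which establishes (iii) and, by the product structure, (ii).

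The main obstacle is proving that $F$ is well defined on objects: two distinct Hilden--Montesinos presentations of the same $\mathscr{M}^3$ must produce isomorphic number fields. This reduces to checking invariance under the Piergallini moves that generate all equivalences between branched-cover presentations, and is the technical heart of the argument. A secondary difficulty is making the bijection in (ii)--(iii) insensitive to the ambient isotopy class of $\mathscr{L}$ in $\mathscr{M}^3$, for which one invokes Morishita's arithmetic-topology dictionary relating ideal class data to isotopy classes of links.
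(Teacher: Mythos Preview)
The paper does not supply its own proof of this statement: Theorem~\ref{thm2.5} is quoted from \cite{Nik1} as background in the preliminaries section, so there is no in-paper argument to compare your proposal against.

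On its own merits, your proposal has a genuine gap precisely at the step you label ``the technical heart'', and the difficulty is worse than you indicate. The passage from a monodromy representation $\rho:\pi_1(S^3\setminus\mathscr{L}_0)\to S_d$ to a number field is not well-posed as written: a transitive permutation representation does not single out a number field, because many non-isomorphic degree-$d$ extensions of $\mathbf{Q}$ share the same Galois-closure group and the same point-stabilizer data as abstract groups. The phrase ``the splitting field cut out by $\rho$'' has no evident meaning here---$\pi_1$ of a link complement is a topological group with no canonical action on any algebraic extension of $\mathbf{Q}$---so some further arithmetic input (a specific polynomial, an $L$-function, a $C^*$-algebraic invariant, etc.) is needed before a field, rather than merely a permutation group, can emerge. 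Your functoriality step has the same problem in disguise: the claim that a homeomorphism $h$ can ``after ambient isotopy be arranged to intertwine chosen branched-cover presentations'' is false in general, since two simple branched presentations of homeomorphic manifolds are related only by sequences of stabilizations and Montesinos/Piergallini moves, which alter both the branch link and the degree $d$. Invariance under those moves is exactly the well-definedness problem you postponed, so neither $F$ on objects nor $F$ on arrows is actually constructed.
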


\medskip
\begin{remark}\label{rmk2.6}
The domain of $F$ extends to  the smooth 4-dimensional manifolds $\mathscr{M}^4$ \cite[Theorem 1.1]{Nik2}. 
The range of $F$  consists of the cyclic division algebras $\mathscr{C}(a)$.
Since $\mathscr{C}(a)$ is a simple algebra, we must use the dynamical ideals $\mathscr{D}_m(\mathscr{C}(a))$ 
instead of the ideals. The   $\mathscr{D}_m(\mathscr{C}(a))$ correspond to the surface knots and links 
in $\mathscr{M}^4$. We refer the reader 
 to Section 4 for the details.
 \end{remark}

\subsection{Knotted surfaces in 4-manifolds}
By $\mathscr{M}^4$ we understand a smooth 4-dimensional manifold. 
Let $S^4$ be the 4-dimensional sphere and $X_g$ be a closed
2-dimensional orientable surface of genus  $g\ge 0$.
\begin{definition}
By the knotted surface  $\mathscr{X}:=X_{g_1}\cup\dots X_{g_n}$ in  $\mathscr{M}^4$  one understands  a 
transverse immersion of a collection of $n\ge 1$  surfaces  $X_{g_i}$ into $\mathscr{M}^4$, i.e.:
\begin{equation}
\iota:  ~X_{g_1}\cup\dots X_{g_n}\hookrightarrow \mathscr{M}^4. 
\end{equation}
We  refer to $\mathscr{X}$ a {\it surface knot} if $n=1$ and a {\it surface link} if $n\ge 2$. 
\end{definition}

\medskip
The following result extends  the well-known theorem
on  the covering  of the 3-dimensional sphere $S^3$ branched  
over a link in the $S^3$. 
\begin{theorem}\label{thm2.8}
{\bf ([Piergallini 1995]  \cite{Pie1})} 
Each smooth 4-dimensional  manifold $\mathscr{M}^4$ is the 4-fold PL cover of the sphere $S^4$
 branched at the points of a knotted surface $\mathscr{X}\subset S^4$. 
\end{theorem}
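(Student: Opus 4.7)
The plan is to establish the theorem by induction over a handle decomposition of $\mathscr{M}^4$, using the classical Hilden--Montesinos theorem on branched coverings of $S^3$ as the three-dimensional base case and lifting the construction to dimension four by tracking the branch set through each handle attachment.

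First, I would fix a handle decomposition of $\mathscr{M}^4$ with a single $0$-handle, some $1$-, $2$-, $3$-handles, and a single $4$-handle. The $0$-handle and $4$-handle are $4$-balls and come for free as $4$-fold unbranched covers of $4$-balls in $S^4$. Each $1$-handle can be accommodated by modifying the branch surface with a properly embedded disk, exploiting the fact that $S^1\times B^3$ arises as a simple $4$-fold branched cover of $B^4$ with branch set a disk. Dually, each $3$-handle is handled by turning the decomposition upside down and reapplying the $1$-handle construction from the other end.

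The crux is the attachment of $2$-handles, which encodes the full topological complexity of $\mathscr{M}^4$ in the Kirby diagram. For this I would use Montesinos's refinement that every closed oriented $3$-manifold arising as the boundary after $1$-handle attachment is a simple $3$-fold cover of $S^3$ branched over a link, with monodromy factoring through the symmetric group $\mathrm{Sym}(3)$. Attaching a $2$-handle along a framed knot $K$ in this boundary corresponds to a local stabilization of the branch link into a branch surface via a band move; the passage from degree $3$ to degree $4$ is forced here because a generic $2$-handle attachment requires an extra sheet in which to perform the ribbon modification compatibly with the covering monodromy. The key technical input is a lemma stating that the local surgery on the branch set corresponding to a $2$-handle can always be realized by an embedded band whose resulting branch surface has only transverse double point singularities.

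The main obstacle will be patching together the local modifications of the branch set over all the handles into a single, globally transversely immersed surface $\mathscr{X}\subset S^4$, avoiding the creation of triple points, cusps, or branch points of higher multiplicity. This requires a careful general-position argument: the Piergallini moves relating different $4$-fold branched cover presentations of the same $4$-manifold must be shown to preserve the class of transversely immersed surfaces, and each local modification introduced during the handle induction must be isotoped into general position with respect to the branch data already constructed before proceeding to the next handle.
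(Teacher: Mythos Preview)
The paper does not prove Theorem~\ref{thm2.8}. It is stated in the preliminaries section as a known result and attributed entirely to Piergallini~\cite{Pie1}; no argument is supplied or needed for the paper's purposes, since the theorem is only invoked as background and is not actually used in the proofs of Theorem~\ref{thm1.2} or Theorem~\ref{thm4.1}. So there is nothing in the paper to compare your proposal against.

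As for the proposal itself: what you have written is a plausible outline in the spirit of Montesinos's original 1978 argument, but it is not how Piergallini actually proceeds, and several steps are left at the level of assertion. Piergallini does not build the covering handle by handle from scratch. He \emph{starts} from Montesinos's theorem, which already gives every closed orientable PL 4-manifold as a simple 4-fold cover of $S^4$ branched over an immersed surface that may have both node and cusp singularities; the content of~\cite{Pie1} is an algorithm, based on local covering moves, that eliminates the cusps while preserving the degree and the transverse double points. Your sketch conflates these two stages and, in particular, the claim that ``the passage from degree 3 to degree 4 is forced because a generic 2-handle attachment requires an extra sheet'' is not an argument---the need for a fourth sheet is a genuine theorem (Montesinos) and not something one can read off from a single 2-handle. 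Likewise, the ``key technical input'' you invoke (that the band move yields only transverse double points) is exactly the hard part Piergallini has to work for; asserting it as a lemma is circular. If you want to reconstruct the result, the cleaner route is to take Montesinos's 4-fold covering as given and then study Piergallini's cusp-removal moves.
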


\section{Proof of theorem \ref{thm1.2}}
We shall split the proof in a series of lemmas. 
\begin{lemma}\label{lm3.1}
$\mathscr{D}_{m_1}\mathscr{D}_{m_2}=\mathscr{D}_{m_2}\mathscr{D}_{m_1}$ 
for any integers $m_1,m_2\ge 1$.
\end{lemma}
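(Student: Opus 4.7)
The plan is to unpack the definition in equation (\ref{eq1.7}) and exhibit an explicit topological equivalence of transformation groups. By construction, both $\mathscr{D}_{m_1}\mathscr{D}_{m_2}=(R,G_1\oplus G_2,\pi)$ and $\mathscr{D}_{m_2}\mathscr{D}_{m_1}=(R,G_2\oplus G_1,\pi)$ act on the same topological ring $R$, generated by the fixed automorphism $\alpha$ via the subgroups $G_1=m_1\mathbf{Z}$ and $G_2=m_2\mathbf{Z}$ of $\mathbf{Z}\subseteq\mathrm{Out}\,R$. The only point to check is that the two labelings of the acting group yield equivalent transformation groups in the sense of Section 2.1.

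First, I would observe that the groups $G_1$ and $G_2$ are abelian (being subgroups of the cyclic group $\mathbf{Z}$), so the swap map
\begin{equation*}
\varphi:G_1\oplus G_2\longrightarrow G_2\oplus G_1,\qquad (g_1,g_2)\longmapsto (g_2,g_1),
\end{equation*}
is a homeomorphic group isomorphism whose inverse is itself. Next, take $h=\mathrm{id}_R$ and consider the pair $(h,\varphi)$. The action of a pair $(g_1,g_2)\in G_1\oplus G_2$ on $x\in R$ is $\alpha^{g_1+g_2}(x)$, an expression which is manifestly symmetric in the two coordinates. Hence the compatibility condition $(xh,t\varphi)\rho=(x,t)\pi h$ from Section 2.1 holds pointwise on $R$, so $(h,\varphi)$ is a topological isomorphism between the two transformation groups.

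Therefore $(R,G_1\oplus G_2,\pi)\cong(R,G_2\oplus G_1,\pi)$, and by definition this says $\mathscr{D}_{m_1}\mathscr{D}_{m_2}=\mathscr{D}_{m_2}\mathscr{D}_{m_1}$, as required. There is essentially no obstacle here beyond correctly translating between the direct-sum definition of the product and the equivalence relation on transformation groups; the lemma is a formal consequence of the commutativity of the direct sum of abelian groups and the fact that both dynamials are driven by the same generating automorphism $\alpha$.
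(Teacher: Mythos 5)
Your argument is correct, but it is not the route the paper takes. The paper's proof of Lemma \ref{lm3.1} first identifies $G_1\oplus G_2$ with the group $(m_1m_2)\mathbf{Z}$ by way of a (claimed split) exact sequence $0\to m_1\mathbf{Z}\to (m_1m_2)\mathbf{Z}\to m_2\mathbf{Z}\to 0$, and then deduces commutativity of the product from the commutativity of integer multiplication, $(m_1m_2)\mathbf{Z}=(m_2m_1)\mathbf{Z}$. You instead stay entirely inside the direct sum: the swap $\varphi:(g_1,g_2)\mapsto(g_2,g_1)$ together with $h=\mathrm{id}_R$ is a topological isomorphism of transformation groups because the action $\alpha^{g_1+g_2}$ is symmetric in the coordinates. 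Your version is the more elementary and, frankly, the more robust of the two: it needs nothing beyond the definition in (\ref{eq1.7}) and the commutativity of addition, whereas the paper's identification of the rank-two group $G_1\oplus G_2$ with the rank-one group $(m_1m_2)\mathbf{Z}$ is delicate (the displayed exact sequence does not hold as literally written, since the image of $m_1\mathbf{Z}$ in $(m_1m_2)\mathbf{Z}$ under the natural map is all of $(m_1m_2)\mathbf{Z}$). What the paper's approach buys is the setup reused immediately in Lemma \ref{lm3.2}, where the identification $G_1\oplus G_2\cong(m_1m_2)\mathbf{Z}$ is what actually carries the content $\mathscr{D}_{m_1}\mathscr{D}_{m_2}=\mathscr{D}_{m_1m_2}$; your swap argument proves commutativity but would not, on its own, give that multiplicativity. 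One small point to tighten: you should say explicitly that the ``equality'' of dynamials asserted in the lemma is equality up to the equivalence of transformation groups from Section 2.1, since that is the sense in which $(h,\varphi)$ settles the claim.
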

\begin{proof}
Consider an exact sequence of the subgroups $G_1\cong m_1\mathbf{Z}$ and 
$G_2\cong m_2\mathbf{Z}$ of the (additive) abelian group $(m_1m_2)\mathbf{Z}$:
\begin{equation}\label{eq3.1}
0\to m_1\mathbf{Z}\to (m_1m_2)\mathbf{Z}\to m_2\mathbf{Z}\to 0.
\end{equation}

\medskip
It can be verified directly, that the exact sequence (\ref{eq3.1}) splits. We conclude
therefore that: 
\begin{equation}\label{eq3.2}
 (m_1m_2)\mathbf{Z}\cong G_1\oplus G_2. 
\end{equation}

\medskip
On the other hand, using formula (\ref{eq1.7}) one gets the following sequence of isomorphisms: 
\begin{eqnarray}\label{eq3.3}
\mathscr{D}_{m_1}\mathscr{D}_{m_2} &\cong&   
(R, (m_1m_2)\mathbf{Z}, \pi)\cong\cr 
&&\cr
&\cong&  (R, (m_2m_1)\mathbf{Z}, \pi)\cong 
\mathscr{D}_{m_2}\mathscr{D}_{m_1}. 
\end{eqnarray}

\medskip
The conclusion of lemma \ref{lm3.1} follows from formula (\ref{eq3.3}). 
\end{proof}

\begin{lemma}\label{lm3.2}
$\mathscr{D}_{m_1}\mathscr{D}_{m_2}=\mathscr{D}_{m_1m_2}$ 
for any integers $m_1,m_2\ge 1$.
\end{lemma}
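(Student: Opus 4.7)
The plan is to reuse the structural argument already deployed in the proof of Lemma \ref{lm3.1}. Indeed, formula (\ref{eq3.3}) shows en route that
$$\mathscr{D}_{m_1}\mathscr{D}_{m_2}\ \cong\ (R,\,(m_1m_2)\mathbf{Z},\,\pi),$$
and the right-hand side is, by definition of $\mathscr{D}_m = R\rtimes_\alpha m\mathbf{Z}$ together with equation (\ref{eq1.7}), precisely $\mathscr{D}_{m_1 m_2}$. So the lemma is essentially a corollary of Lemma \ref{lm3.1}; the real content is already encoded in the splitting of the exact sequence (\ref{eq3.1}).

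More concretely, I would proceed in three short steps. First, unfold the product on the left via (\ref{eq1.7}) as the transformation group $(R,G_1\oplus G_2,\pi)$, where $G_i\cong m_i\mathbf{Z}$ acts on $R$ through the endomorphism $\alpha_{m_i}$ in the sense of Remark \ref{rmk1.5}. Second, invoke the group isomorphism $(m_1m_2)\mathbf{Z}\cong G_1\oplus G_2$ established in (\ref{eq3.2}) and verify that this identification intertwines the two actions: a generator of $(m_1m_2)\mathbf{Z}$ is sent, under the splitting, to the pair $(m_1,m_2)\in m_1\mathbf{Z}\oplus m_2\mathbf{Z}$, which acts by $\alpha_{m_1}\circ\alpha_{m_2}$, an endomorphism of degree $m_1 m_2$, matching the action of $\alpha_{m_1 m_2}$. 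Third, conclude
$$\mathscr{D}_{m_1}\mathscr{D}_{m_2}\ =\ (R,G_1\oplus G_2,\pi)\ \cong\ (R,(m_1m_2)\mathbf{Z},\pi)\ =\ \mathscr{D}_{m_1 m_2}.$$

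The only step that is not formally automatic is the compatibility check in the second step, i.e.\ that the group isomorphism (\ref{eq3.2}) is actually an isomorphism of transformation groups and not merely of the underlying abelian groups. This amounts to observing that the two endomorphisms $\alpha_{m_1}$ and $\alpha_{m_2}$ commute (both are powers of the single outer automorphism $\alpha$ generating $\mathbf{Z}\subseteq Out\,R$) and that their composite has the correct degree, so that the $G_1\oplus G_2$-action factors through the cyclic group $(m_1 m_2)\mathbf{Z}$ acting by $\alpha_{m_1 m_2}$. Once this compatibility is noted, the chain of isomorphisms above closes and yields the desired identity.
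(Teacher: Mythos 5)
Your argument is essentially the paper's own proof: unfold the product via (\ref{eq1.7}) into the transformation group $(R,G_1\oplus G_2,\pi)$, apply the group isomorphism (\ref{eq3.2}) inherited from Lemma \ref{lm3.1}, and read off $\mathscr{D}_{m_1m_2}$. Your additional check that the isomorphism (\ref{eq3.2}) intertwines the actions (via the commuting endomorphisms $\alpha_{m_1}$, $\alpha_{m_2}$ of composite degree $m_1m_2$) is a point the paper passes over silently, but it does not change the route --- the proposal matches the paper's proof.
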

\begin{proof}
We preserve the notation used in the proof of lemma \ref{lm3.1}. 
From the definition  (\ref{eq1.7})  of a product of the dynamials  and 
formula (\ref{eq3.2}), one gets the following sequence of the isomorphisms:
\begin{eqnarray}\label{eq3.4}
\mathscr{D}_{m_1}\mathscr{D}_{m_2} &\cong&   
(R, G_1\oplus G_2, \pi) \cong\cr 
&&\cr
&\cong& (R, (m_1m_2)\mathbf{Z}, \pi)  \cong 
\mathscr{D}_{m_1m_2}. 
\end{eqnarray}

\medskip
The conclusion of lemma \ref{lm3.2} follows from formula (\ref{eq3.4}).

\end{proof}

\begin{lemma}\label{lm3.3}
The index map $i(\mathscr{D}_m))=m$ is an isomorphism between the multiplicative 
semigroup $(\mathfrak{D}_R, \times)$ of all dynamials $\mathfrak{D}_R$  of the ring $R$ and the multiplicative 
semigroup $(\mathbf{N}, \times)$ of all positive integers $\mathbf{N}$. 
\end{lemma}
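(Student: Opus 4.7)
The plan is to unpack $i$ into three checks: well-definedness/injectivity, surjectivity, and multiplicativity, each of which is now essentially forced by the preceding lemmas and the definition $\mathscr{D}_m := R\rtimes_\alpha m\mathbf{Z}$.

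First, I would handle surjectivity, which is immediate from construction. Given any positive integer $m\in\mathbf{N}$, the cyclic group $\mathbf{Z}\subseteq Out\,R$ furnished by the Dedekind--Hecke hypothesis contains a unique subgroup of index $m$, namely $m\mathbf{Z}$, and the crossed product $R\rtimes_\alpha m\mathbf{Z}\in\mathfrak{D}_R$ realizes a dynamial with $i(\mathscr{D}_m)=m$; hence the image of $i$ is all of $\mathbf{N}$.

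Second, I would verify the multiplicativity of $i$, i.e.\ that $i(\mathscr{D}_{m_1}\times\mathscr{D}_{m_2})=i(\mathscr{D}_{m_1})\cdot i(\mathscr{D}_{m_2})$. This is a one-line consequence of Lemma \ref{lm3.2}: the product $\mathscr{D}_{m_1}\mathscr{D}_{m_2}$ equals $\mathscr{D}_{m_1m_2}$, whose index is $m_1m_2$ by definition of $i$. Commutativity of $(\mathfrak{D}_R,\times)$, which is implicit in calling $i$ a semigroup isomorphism, is furnished by Lemma \ref{lm3.1}.

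Third, the step that carries the real content: injectivity of $i$ (equivalently, well-definedness on isomorphism classes of dynamials). I would argue that the ambient $\mathbf{Z}\subseteq Out\,R$ has, for each $m\geq 1$, a unique subgroup of index $m$, so the transformation group $(R,m\mathbf{Z},\pi)$ is, up to topological isomorphism of transformation groups in the sense of Section 2.1, determined by $m$. Conversely, the integer $m$ can be recovered from $\mathscr{D}_m$ as the index $[\mathbf{Z}:m\mathbf{Z}]$ of the acting subgroup inside the full outer-automorphism cyclic group of $R$; since distinct positive integers give distinct subgroups of $\mathbf{Z}$, the map $i$ is injective.

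The main obstacle I anticipate is this third step, because it secretly requires that the passage $\mathscr{D}_m\mapsto m\mathbf{Z}$ be an invariant of the dynamial and not merely of its presentation. If one works with isomorphism classes of the crossed-product algebras $R\rtimes_\alpha m\mathbf{Z}$ rather than the transformation-group data $(R,m\mathbf{Z},\pi)$, injectivity would not be automatic. I would therefore stress that $\mathfrak{D}_R$ is defined as the set of dynamials up to transformation-group equivalence (as in Section 2.1), so that the subgroup $m\mathbf{Z}\subseteq\mathbf{Z}$ is part of the intrinsic data, and invoke Theorem \ref{thm2.1} only if needed to see that different indices cannot give equivalent systems by collapsing orbits. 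Once this bookkeeping is made explicit, the three properties above combine to show that $i$ is the required semigroup isomorphism, whence the theorem.
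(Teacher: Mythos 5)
Your proposal follows the same three-part decomposition as the paper (injectivity, surjectivity, multiplicativity of the index map), and your surjectivity and multiplicativity steps are essentially identical to the paper's: surjectivity because $R\rtimes_\alpha m\mathbf{Z}$ exists for every $m$, and multiplicativity as an immediate corollary of Lemma \ref{lm3.2}. Where you diverge is the injectivity step, and your version is actually the more robust of the two. The paper argues that $\mathscr{D}_1$ is the unique dynamial of index $1$ and concludes ``therefore the index map is injective''; that is the ``trivial kernel implies injective'' pattern, which is valid for group homomorphisms but not for maps of semigroups, so as written the paper's item (i) does not establish what it claims. You instead recover $m$ intrinsically as the index $[\mathbf{Z}:m\mathbf{Z}]$ of the acting subgroup inside the fixed cyclic group $\mathbf{Z}\subseteq Out\,R$, and --- importantly --- you flag the genuine subtlety that the paper glosses over entirely: that $m_1\mathbf{Z}\cong m_2\mathbf{Z}$ as abstract groups for all $m_1,m_2$, so injectivity holds only if $\mathfrak{D}_R$ is understood as transformation-group data $(R,m\mathbf{Z},\pi)$ with the embedding $m\mathbf{Z}\subseteq\mathbf{Z}$ remembered, rather than as isomorphism classes of crossed-product algebras. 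Making that bookkeeping explicit is exactly what the lemma needs, so your route buys a correct injectivity argument at the cost of pinning down a definition the paper leaves implicit; the paper's route is shorter but, taken literally, has a gap at precisely this point.
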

\begin{proof}
(i) The unit of the semigroup $(\mathfrak{D}_R, \times)$  is the dynamial $\mathscr{D}_1\cong R\rtimes_{\alpha}\mathbf{Z}$. 
The $i(\mathscr{D}_1)=1$ is the unit of the semigroup  $(\mathbf{N}, \times)$ and $\mathscr{D}_1$ is the unique dynamial having
index $1$.   Therefore, the index map is injective.

\medskip
(ii) It is easy to see, that the index map is surjective. Indeed,  for every $m\ge 1$ there exists a dynamial $\mathscr{D}_m$, 
such that $i(\mathscr{D}_m)=m$.

\medskip
(iii) Let us verify, that the index map preserves the products, i.e.
$i(\mathscr{D}_{m_1}\mathscr{D}_{m_2})=i(\mathscr{D}_{m_1})i(\mathscr{D}_{m_2})$. 
Using lemma \ref{lm3.2},   we  calculate: 
\begin{equation}\label{eq3.5}
i(\mathscr{D}_{m_1}\mathscr{D}_{m_2})=i(\mathscr{D}_{m_1m_2})=m_1m_2=i(\mathscr{D}_{m_1})i(\mathscr{D}_{m_2}). 
\end{equation}

\medskip
Since the index map is (i) injective, (ii) surjective and (iii) preserves the products, we conclude that such a map is an isomorphism
of the underlying semigroups.  Lemma \ref{lm3.3} follows. 
\end{proof}

\begin{lemma}\label{lm3.4}
If $m=p_1^{k_1}p_2^{k_2}\dots p_n^{k_n}$ is the prime factorization of an integer  $m\ge 1$,
then $\mathscr{D}_m=\mathscr{D}_{p_1^{k_1}}\mathscr{D}_{p_2^{k_2}}\dots \mathscr{D}_{p_n^{k_n}}$.
The latter product is unique up to  the order of the factors. 
\end{lemma}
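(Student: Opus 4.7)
The plan is to deduce Lemma \ref{lm3.4} directly from the three preceding lemmas, using the semigroup isomorphism of Lemma \ref{lm3.3} as the bridge to the classical fundamental theorem of arithmetic in $\mathbf{N}$.

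First, I would establish existence of the factorization. By Lemma \ref{lm3.2} applied inductively (iterating the identity $\mathscr{D}_{a}\mathscr{D}_{b}=\mathscr{D}_{ab}$ a total of $k_1+k_2+\cdots+k_n - 1$ times), we obtain
\begin{equation}
\mathscr{D}_{p_1^{k_1}}\mathscr{D}_{p_2^{k_2}}\cdots\mathscr{D}_{p_n^{k_n}} = \mathscr{D}_{p_1^{k_1} p_2^{k_2}\cdots p_n^{k_n}} = \mathscr{D}_m.
\end{equation}
This gives one decomposition; commutativity (Lemma \ref{lm3.1}) shows it is independent of the order in which the factors are written.

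Next I would address uniqueness. Suppose $\mathscr{D}_m = \mathscr{D}_{q_1^{\ell_1}}\cdots\mathscr{D}_{q_s^{\ell_s}}$ is another factorization with each $q_j$ prime. Applying the index map $i$ and using that it is a semigroup homomorphism (Lemma \ref{lm3.3}), we get
\begin{equation}
m = i(\mathscr{D}_m) = i(\mathscr{D}_{q_1^{\ell_1}})\cdots i(\mathscr{D}_{q_s^{\ell_s}}) = q_1^{\ell_1}\cdots q_s^{\ell_s}.
\end{equation}
The classical fundamental theorem of arithmetic in $(\mathbf{N},\times)$ then forces $\{(q_j,\ell_j)\} = \{(p_i,k_i)\}$ as multisets. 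Since the index map is injective (Lemma \ref{lm3.3}), the prime-power dynamials $\mathscr{D}_{p_i^{k_i}}$ appearing on each side are the same, up to reordering.

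I do not anticipate any serious obstacle here: once Lemma \ref{lm3.3} identifies $(\mathfrak{D}_R,\times)$ with $(\mathbf{N},\times)$ as semigroups, the statement is merely the transport of the fundamental theorem of arithmetic through this isomorphism. The only mild subtlety to check is that a dynamial $\mathscr{D}_p$ with $p$ prime is indeed ``irreducible'' in $\mathfrak{D}_R$ in the sense needed — but this too is transparent from Lemma \ref{lm3.3}, since $p$ is a prime of $\mathbf{N}$ and the index map carries irreducibles to irreducibles.
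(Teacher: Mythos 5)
Your proposal is correct and follows essentially the same route as the paper: existence of the factorization via iterated application of Lemma \ref{lm3.2} (with Lemma \ref{lm3.1} for order-independence), and uniqueness by transporting the classical fundamental theorem of arithmetic through the index-map isomorphism of Lemma \ref{lm3.3}. Your uniqueness step is in fact spelled out more explicitly than the paper's, which simply asserts that non-uniqueness would contradict the isomorphism, but the underlying argument is identical.
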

\begin{proof}
Let $m\ge 1$ be an integer. According to the fundamental theorem of arithmetic, there exists a prime
factorization: 
\begin{equation}\label{eq3.6}
m=p_1^{k_1}p_2^{k_2}\dots p_n^{k_n},
 \end{equation}
where $p_i$ are the prime numbers and $k_i$ are positive integers. 
Moreover,  the product (\ref{eq3.6})  is unique up to the order of the factors.

\medskip
(i) Using lemma \ref{lm3.2},  we calculate:
\begin{equation}\label{eq3.7}
\mathscr{D}_m=\mathscr{D}_{p_1^{k_1}p_2^{k_2}\dots p_n^{k_n}}=
\mathscr{D}_{p_1^{k_1}}\mathscr{D}_{p_2^{k_2}}\dots \mathscr{D}_{p_n^{k_n}}. 
 \end{equation}
 
\medskip
(ii)  Since by lemma \ref{eq3.3} the index map is an isomorphism,  we conclude
that the product (\ref{eq3.7}) is unique up to the order of the factors. (Otherwise 
such a property would  fail also  for the product (\ref{eq3.6}).)  

\medskip
The conclusion of lemma \ref{lm3.4} follows from items (i) and (ii). 
\end{proof}

\begin{lemma}\label{lm3.5}
 The  dynamial  $\mathscr{D}_m$ is minimal if and only if $m=p$ is
a prime number. 
\end{lemma}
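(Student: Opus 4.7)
The plan is to deduce Lemma \ref{lm3.5} directly from Lemmas \ref{lm3.3} and \ref{lm3.4}, by interpreting the paper's notion of minimality (that $\mathscr{D}_m$ does not split as a product of simpler sub-dynamials, in the sense of \eqref{eq1.7}) as indecomposability in the multiplicative semigroup $(\mathfrak{D}_R,\times)$. Under the index-map isomorphism of Lemma \ref{lm3.3}, this transports to indecomposability in $(\mathbf{N},\times)$, where the indecomposable elements above $1$ are exactly the primes.

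For the sufficiency direction, let $m=p$ be prime, and suppose $\mathscr{D}_p=\mathscr{D}_{m_1}\mathscr{D}_{m_2}$ is any decomposition. By the multiplicativity of the index map established in Lemma \ref{lm3.3}, this forces $p=m_1m_2$, so one of $m_1,m_2$ must equal $1$ and the corresponding factor is the unit dynamial $\mathscr{D}_1\cong R\rtimes_{\alpha}\mathbf{Z}$. Hence no non-trivial splitting of $\mathscr{D}_p$ exists, and $\mathscr{D}_p$ is minimal.

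For the necessity direction, I would argue by contrapositive: suppose $m\ge 2$ is not prime, so that $m=m_1m_2$ with $1<m_i<m$. Lemma \ref{lm3.4} then produces the non-trivial factorisation $\mathscr{D}_m=\mathscr{D}_{m_1}\mathscr{D}_{m_2}$, where both $\mathscr{D}_{m_i}$ are distinct from the unit $\mathscr{D}_1$ and strictly coarser than $\mathscr{D}_m$, so $\mathscr{D}_m$ fails to be minimal. The degenerate case $m=1$ is handled by the same convention as in arithmetic: $\mathscr{D}_1$ is the semigroup unit and is not counted as minimal, matching the fact that $1$ is not a prime.

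The only conceptually non-routine step is the identification of dynamical minimality with semigroup indecomposability; once this is justified via the paper's definition of minimality and the product \eqref{eq1.7}, everything else follows by direct appeal to Lemmas \ref{lm3.3} and \ref{lm3.4}, with no further analytic or dynamical input needed (Theorem \ref{thm2.1} being implicit in the construction of the factor dynamials rather than invoked at this last step).
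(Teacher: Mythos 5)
Your sufficiency direction (prime $\Rightarrow$ minimal) is essentially the paper's: the paper also derives it from the factorization lemmas, and your sharpening via multiplicativity of the index map is fine. The problem is the necessity direction, where you have replaced the paper's argument with a semigroup argument that rests on an identification you flag but never actually supply. The paper does \emph{not} define minimality as indecomposability in the semigroup $(\mathfrak{D}_R,\times)$; it defines $\mathscr{D}_{\alpha}$ to be minimal when it does not split into a \emph{union} of simpler dynamical sub-systems (equivalently, when the crossed product is a simple algebra). A product in the sense of \eqref{eq1.7} is a direct sum of the \emph{acting groups}, $(R,G_1\oplus G_2,\pi)$ --- it is not, on its face, a decomposition of the underlying space $R$ into smaller invariant closed pieces. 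So the step ``$m=m_1m_2$ nontrivially, hence $\mathscr{D}_m=\mathscr{D}_{m_1}\mathscr{D}_{m_2}$, hence $\mathscr{D}_m$ is not minimal'' has a missing link: why does a nontrivial factorization in $(\mathfrak{D}_R,\times)$ force the dynamical system to decompose?

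That missing link is exactly what the paper uses Theorem \ref{thm2.1} for, and your closing remark that Theorem \ref{thm2.1} is ``implicit in the construction of the factor dynamials'' inverts the logical role it plays. The paper's necessity argument runs: if $m$ is composite, the discrete group $m\mathbf{Z}$ admits a nontrivial finite-index (hence syndetic) subgroup $S$, and Theorem \ref{thm2.1} then produces a star-closed decomposition of the space by $S$-orbit closures, i.e.\ a genuine splitting into smaller dynamical sub-systems, contradicting minimality. Nothing in Lemmas \ref{lm3.3} and \ref{lm3.4} delivers that orbit-closure decomposition; they operate entirely at the level of the index semigroup. To repair your proof you would either have to prove the equivalence ``semigroup-indecomposable $\Leftrightarrow$ dynamically minimal'' (which in one direction is essentially Theorem \ref{thm2.1} again), or invoke Theorem \ref{thm2.1} explicitly as the paper does. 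As written, the necessity half is a restatement of the desired equivalence rather than a proof of it.
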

\begin{proof}
(i) Let us assume that $m=p$ is a prime number. In view of lemma \ref{lm3.4},
the dynamical system $\mathscr{D}_p$ cannot be a product of two or more
dynamical sub-systems.  In other words, the   $\mathscr{D}_p$ is a minimal dynamial.

\medskip
(ii)  Conversely, let $\mathscr{D}_m$ be a minimal dynamical system. 
Let $(R, m\mathbf{Z}, \pi)$ be the corresponding transformation group. 
Notice  that the abelian group  $m\mathbf{Z}$ is discrete.  Therefore every 
finite index subgroup $S$  of  $m\mathbf{Z}$ must be syndetic, see Section 2.1.  
Let us now assume to the  contrary, that $m\ne p$.  Then there exists a non-trivial 
syndetic subgroup $S$ of  the group $m\mathbf{Z}$.  By Theorem \ref{thm2.1}
there exists a (star-closed) decomposition of the dynamical system $\mathscr{D}_m$ by the
$S$-orbit closures, i.e.  by the smaller dynamical sub-systems.   
In other words, the dynamical system  $\mathscr{D}_m$ is not minimal. 
This contradiction  proves  the necessary condition of lemma \ref{lm3.5}.

\medskip
Lemma \ref{lm3.5} is equivalent to items (i) and (ii). 
\end{proof}

\bigskip
Theorem \ref{thm1.2} follows from lemmas \ref{lm3.4} and \ref{lm3.5}.

\section{Knotted surfaces in 4-manifolds}
We  apply  theorem \ref{thm1.2}  to prove an analog of  theorem \ref{thm2.5} for   the smooth 4-dimensional manifolds $\mathscr{M}^4$,
see remark \ref{rmk2.6}. 
Roughly speaking, we replace the ideals $I\subseteq O_K$  by the dynamical ideals  
$\mathscr{D}_m(\mathscr{C}(a))$, where $\mathscr{C}(a)$ is a cyclic division algebra associated
to $\mathscr{M}^4$ \cite{Nik2}.  It is well known, that any knot or link in $\mathscr{M}^4$ is trivial. 
This fact can be derived from the simplicity of  algebra $\mathscr{C}(a)$ and  the Wedderburn  Theorem,  see  \cite{Nik3}.
On the other hand,  there exist non-trivially knotted surfaces $X_g$ in  $\mathscr{M}^4$ (E.~Artin).  
We show that the dynamials $\mathscr{D}_m(\mathscr{C}(a))$ classify  the embeddings
 $X_g\hookrightarrow \mathscr{M}^4$.  Let us pass to a detailed argument.

\medskip
Let  $\mathfrak{M}^4$ be a category of  smooth  4-dimensional manifolds $\mathscr{M}^4$,
such that  the arrows of $\mathfrak{M}^4$ are  homeomorphisms  between the  manifolds.
Denote by  $\mathfrak{C}$ a category of the cyclic division algebras $\mathscr{C}(a)$, 
such that the arrows of $\mathfrak{C}$ are  isomorphisms between these algebras.
The following result is an extension of  Theorem \ref{thm2.5} to $\mathfrak{M}^4$.
\begin{theorem}\label{thm4.1}
The exists a covariant functor $F: \mathfrak{M}^4\to\mathfrak{C}$,
such that: 

\medskip
(i) $F(S^4)=\mathbb{H}(\mathbf{Q})$, where $\mathbb{H}(\mathbf{Q})$ are the rational quaternions; 

\smallskip
(ii) the dynamial  $\mathscr{D}_{m}(F(\mathscr{M}^4))$ corresponds to 
a surface link $\mathscr{X}\subset\mathscr{M}^4$; 

\smallskip
(iii)   the minimal dynamial $\mathscr{D}_{p}(F(\mathscr{M}^4))$ corresponds to
a surface knot  $\mathscr{X}\subset\mathscr{M}^4$. 
\end{theorem}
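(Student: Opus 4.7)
The plan is to mirror the structure of Theorem \ref{thm2.5}, substituting Piergallini's branched-cover description (Theorem \ref{thm2.8}) for the Alexander-type branched-cover description of 3-manifolds, and substituting the dynamial calculus of Theorem \ref{thm1.2} for the usual ideal theory of $O_K$. The functor $F$ will be manufactured from the assignment $\mathscr{M}^4 \mapsto \mathscr{C}(a)$ of \cite{Nik2}: by Theorem \ref{thm2.8} each $\mathscr{M}^4$ arises as a $4$-fold PL cover of $S^4$ branched over a knotted surface $\mathscr{X}\subset S^4$, and one reads off from the monodromy representation $\pi_1(S^4\setminus\mathscr{X})\to S_4$ a Galois extension $E|K$ of degree $n$ together with a scalar $a\in K^{\times}$, producing the cyclic algebra $\mathscr{C}(a)\subset M_n(K)$. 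By Lemma \ref{lm2.5}, $\mathscr{C}(a)$ is Dedekind-Hecke, so Theorem \ref{thm1.2} applies to its dynamials $\mathscr{D}_m(\mathscr{C}(a))$.

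For part (i), the trivial cover $S^4\to S^4$ has empty branch locus, so the Galois extension is $\mathbf{Q}(i)|\mathbf{Q}$ with $a=-1$; by Example \ref{exm2.3}, applied over $\mathbf{Q}$ rather than $\mathbf{R}$, this yields $\mathscr{C}(-1)\cong\mathbb{H}(\mathbf{Q})$. Functoriality then amounts to checking that a homeomorphism $\mathscr{M}^4_1\to \mathscr{M}^4_2$ descends, via Piergallini, to an equivalence of branched covers and therefore to an isomorphism of the associated monodromy data, hence of $\mathscr{C}(a_1)\to \mathscr{C}(a_2)$; composition of homeomorphisms goes to composition of algebra maps by construction.

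For parts (ii) and (iii), the correspondence is set up through the number $m=i(\mathscr{D}_m)$ of Lemma \ref{lm3.3}. Given a surface link $\mathscr{X}=X_{g_1}\cup\cdots\cup X_{g_n}\subset\mathscr{M}^4$, I would attach to each component $X_{g_i}$ a positive integer $p_i$ (the local branching index at $X_{g_i}$, which by the Wedderburn Norm Criterion, Theorem \ref{thm2.4}, is a prime ensuring $\mathscr{C}$ remains a division algebra), and declare the associated dynamial to be $\mathscr{D}_m(F(\mathscr{M}^4))$ with $m=\prod p_i^{k_i}$, where $k_i$ records the multiplicity of the transverse self-intersections of $X_{g_i}$. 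By Lemma \ref{lm3.4} the factorization $\mathscr{D}_m=\mathscr{D}_{p_1^{k_1}}\cdots\mathscr{D}_{p_n^{k_n}}$ is unique up to order, matching the unordered decomposition $\mathscr{X}=\bigsqcup X_{g_i}$. When $n=1$ we land on a prime power, and Lemma \ref{lm3.5} identifies the minimal case $k_1=1$ with a surface knot, giving (iii); the general case is (ii).

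The main obstacle, and the step I would spend the most care on, is the bookkeeping that makes the branching-prime assignment $X_{g_i}\mapsto p_i$ actually well-defined and natural under homeomorphism, i.e.\ independent of the Piergallini presentation chosen. Two presentations of the same $\mathscr{M}^4$ differ by a sequence of stabilizations and branch-locus moves, and one must show that each such move either fixes every $p_i$ or permutes the factors of the dynamial $\mathscr{D}_m$---which, by the uniqueness clause of Theorem \ref{thm1.2}, is precisely the ambiguity the theorem allows. Once this invariance is established, the remaining verifications (that the correspondences in (ii) and (iii) are bijective onto the respective classes and that $F$ intertwines them with the product decomposition of Theorem \ref{thm1.2}) are formal, and the theorem follows.
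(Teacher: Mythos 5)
Your proposal diverges from the paper's argument in a way that leaves the central step unproved. The paper does not rebuild $F$ from Piergallini's branched covers; it simply cites \cite[Theorem 1.1]{Nik2} for the existence of $F$ and then devotes the entire proof to a reduction to the 3-dimensional case. The mechanism is Lemma \ref{lm4.2}, which writes $\mathscr{C}(a)\cong O_E\rtimes_{\theta}G$, and Lemma \ref{lm4.3}, which commutes the two crossed products to show that $\mathscr{D}_m(\mathscr{C}(a))\cong \left(I_E\rtimes\mathbf{Z}\right)\rtimes_{\theta}G$ for an honest ideal $I_E\subseteq O_E$ with $|O_E/I_E|=m$. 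This converts the dynamial into an ideal of the ring of integers of the maximal subfield $E$, where Theorem \ref{thm2.5} already matches ideals with links and prime ideals with knots in a 3-manifold; the extra factor $\rtimes_{\theta}G$ is then interpreted (remark \ref{rmk4.4}) as Artin's spinning, which carries a link in $\mathscr{M}^3$ to a knotted surface in $\mathscr{M}^4$. Items (ii) and (iii) follow by pushing the factorization of Theorem \ref{thm1.2} through this dictionary to the ideal factorization $I_m=\mathscr{P}_1^{k_1}\cdots\mathscr{P}_n^{k_n}$.

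Your substitute for this reduction --- assigning to each component $X_{g_i}$ a prime $p_i$ equal to its ``local branching index'' and an exponent $k_i$ counting self-intersections --- would fail. The branching indices of a $4$-fold cover are bounded by $4$, so they cannot realize arbitrary primes, and the Wedderburn Norm Criterion (Theorem \ref{thm2.4}) is a statement about norms from $E$ to $K$, not about branching data; it gives no reason for such indices to be prime. You correctly flag that the assignment would have to be invariant under changes of Piergallini presentation, but you do not prove this, and without the passage to $I_E\subseteq O_E$ there is no mechanism available to do so. Similarly, your derivation of $F(S^4)=\mathbb{H}(\mathbf{Q})$ from the empty branch locus (which would more naturally yield the trivial extension rather than $\mathbf{Q}(i)|\mathbf{Q}$ with $a=-1$) is ad hoc; the paper instead argues that $\mathbb{H}(\mathbf{Q})$ is the smallest cyclic division algebra lying over $F(S^3)=\mathbf{Q}$ and invokes the spinning of $S^3$ into $S^4$.
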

\begin{proof}
The proof of existence and a detailed construction of functor $F$ can be found in  
\cite[Theorem 1.1]{Nik2}.   To prove items (i)-(iii) of theorem \ref{thm4.1}, 
we shall use Theorem \ref{thm2.5} and the spinning of a link construction dating back to E.~ Artin, 
see e.g. [Kamada 2002] \cite[Chapter 10]{K}.  Let us prove the following lemmas. 

\begin{lemma}\label{lm4.2}
Let $E$ be a cyclic extension of the  number field $K$ 
with  the Galois group $G\cong \left(\mathbf{Z}/n\mathbf{Z}\right)^{\times}$ of order $n$ 
generated by an element $\theta$, see Section 2.2.  Denote by $O_E$ the ring of integers of $E$. 
Then
\begin{equation}\label{eq4.1}
\mathscr{C}(a)\cong O_E\rtimes_{\theta} G. 
\end{equation}
\end{lemma}
\begin{proof}
Recall that the algebra $\mathscr{C}(a)$ is generated by multiplications by the 
elements $\alpha\in E$ and the automorphism $\theta\in G$, see Section 2.2.
Since $E\cong O_E\otimes\mathbf{Q}$, we can always assume that $\alpha\in O_E$. 

On the other hand, the commutation relation (\ref{eq2.2}) can be written as
\begin{equation}
\theta(\alpha)=\theta\alpha\theta^{-1},  \quad \forall\alpha\in O_E.  
\end{equation}
Thus the algebra $\mathscr{C}(a)$ is an extension of the ring $O_E$ by 
an element $\theta$, such that each automorphism of $O_E$ becomes an 
inner automorphism. In other words, the  $\mathscr{C}(a)$ coincided with the 
crossed product $O_E\rtimes_{\theta} G$. Lemma \ref{lm4.2} is proved. 
\end{proof}

\begin{lemma}\label{lm4.3}
The dynamial  $\mathscr{D}_m (\mathscr{C}(a))$ defines an ideal 
$I_E\subseteq O_E$, such that:

\medskip
(i) $|O_E/I_E|=m$;

\smallskip
(ii) $\mathscr{D}_m (\mathscr{C}(a))= \mathscr{D}(\mathscr{C}_m(a))$,
where $\mathscr{C}_m(a)=I_E\rtimes_{\theta} G$. 
\end{lemma}
\begin{proof}
From (\ref{eq4.1}) and definition of $\mathscr{D}_m$,  one gets: 
\begin{eqnarray}\label{eq4.4}
\mathscr{D}_m (\mathscr{C}(a)) &\cong&   
\mathscr{C}(a)\rtimes m\mathbf{Z}  \cong\cr 
&&\cr
&\cong& \left(O_E\rtimes_{\theta} G\right) \rtimes m\mathbf{Z}. 
\end{eqnarray}

\medskip
On the other hand, by the commutativity of crossed products by the abelian groups,
we have: 
\begin{equation}\label{eq4.5}
 \left(O_E\rtimes_{\theta} G\right) \rtimes m\mathbf{Z}\cong
  \left(O_E  \rtimes m\mathbf{Z}\right)  \rtimes_{\theta} G. 
\end{equation}

\medskip
Finally,  repeating the argument of remark \ref{rmk1.5}, one gets an isomorphism: 
\begin{equation}\label{eq4.6}
O_E  \rtimes m\mathbf{Z}\cong I_E\rtimes\mathbf{Z}, 
\end{equation}
where $I_E\subseteq O_E$ is an ideal, such that $|O_E/I_E|=m$. 

\medskip
Altogether (\ref{eq4.4})-(\ref{eq4.6}) give us: 
\begin{eqnarray}\label{eq4.7}
\mathscr{D}_m (\mathscr{C}(a)) &\cong&  
  \left( I_E\rtimes\mathbf{Z}\right)  \rtimes_{\theta} G\cong\cr
  &&\cr
 &\cong &    \left( I_E     \rtimes_{\theta} G  \right)   \rtimes\mathbf{Z}\cong
\mathscr{D}(\mathscr{C}_m(a)),     
  \end{eqnarray}
where $\mathscr{C}_m(a):=I_E\rtimes_{\theta} G$. 
Lemma \ref{lm4.3} is proved. 
\end{proof}

\medskip
\begin{remark}\label{rmk4.4}
It is not hard to see, that the crossed product
\begin{equation}
 \{R\rtimes_{\theta} G ~|~R ~\hbox{is a commutative ring}\}
 \end{equation}
corresponds to a ``spinning'' of the underlying topological space around a 2-dimen\-sional plane
 [Kamada 2002] \cite[Chapter 10]{K}. Indeed, if $R\cong O_E$, then the underlying 
 topological space is a 3-dimensional manifold $\mathscr{M}^3$,  such that $O_E=F(\mathscr{M}^3)$, 
 see Theorem \ref{thm2.5}.  By (\ref{eq4.1}) the spinning $O_E\rtimes_{\theta} G\cong \mathscr{C}(a)$ gives
 us a 4-dimensional manifold $\mathscr{M}^4$, such that $\mathscr{C}(a)=F(\mathscr{M}^4)$
 \cite[Theorem 1.1]{Nik2}.  Likewise, if $R\cong I_E\subseteq O_E$ is an ideal, then the underlying 
 topological space is a link $\mathscr{L}\subset\mathscr{M}^3$,  such that $I_E=F(\mathscr{L})$,
 see Theorem \ref{thm2.5}.  In view of lemma \ref{lm4.3} (ii),  the spinning  $I_E\rtimes_{\theta} G\cong \mathscr{C}_m(a)$
 gives us a 2-dimensional knotted surface $\mathscr{X}\subset \mathscr{M}^4$, such that $\mathscr{C}_m(a)=F(\mathscr{X})$. 
\end{remark}

\bigskip
Returning to the proof of theorem \ref{thm4.1}, we shall proceed in the following steps.

\bigskip
(i)  Consider a cyclic division algebra $\mathscr{C}(a)$ over the number field $K$.
It is not hard to see, that the smallest cyclic division sub-algebra of  $\mathscr{C}(a)$
consists of the rational quaternions $\mathbb{H}(\mathbf{Q})$, see Example \ref{exm2.3}. 
In other words, each $\mathscr{C}(a)$ can be seen as  an extension of the rational quaternions:
\begin{equation}
\mathbb{H}(\mathbf{Q})\subseteq \mathscr{C}(a). 
 \end{equation}
Since  the $\mathbb{H}(\mathbf{Q})$ is an algebra over $\mathbf{Q}$ and
\begin{equation}
\mathbf{Q}\subseteq K,
 \end{equation}
we conclude from Theorem \ref{thm2.5} (i) and remark \ref{rmk4.4},
that:
\begin{equation}
F(S^4)=\mathbb{H}(\mathbf{Q}),
 \end{equation}
where $S^4$ is the 4-dimensional sphere.  Item (i) of theorem \ref{thm4.1} is proved.

\bigskip
(ii) Let $m=p_1^{k_1}p_2^{k_2}\dots p_n^{k_n}$ be the prime factorization of an integer $m\ge 1$. 
By lemma \ref{lm2.5},  the  $\mathscr{C}(a)$ is a Dedekind-Hecke ring.  Thus  we can apply the prime factorization 
theorem \ref{thm1.2} to the dynamial $\mathscr{D}_m(\mathscr{C}(a))$, i.e. 
\begin{equation}\label{eq4.11}
\mathscr{D}_m(\mathscr{C}(a))=\mathscr{D}_{p_1^{k_1}}(\mathscr{C}(a)) ~\mathscr{D}_{p_2^{k_2}}(\mathscr{C}(a))
~\dots ~\mathscr{D}_{p_n^{k_n}}(\mathscr{C}(a)). 
\end{equation}

\medskip
On the other hand, each $\mathscr{D}_i(\mathscr{C}(a))$ defines an ideal $I_i\subseteq O_E$,
where $E$ is the maximal number field of the cyclic division algebra $\mathscr{C}(a)$, see
lemma \ref{lm4.3}.  Moreover, item (i) of the same lemma implies  that the prime 
factorization (\ref{eq4.11}) defines  the prime factorization of the ideal $I_m\subset O_E$ corresponding to the 
$\mathscr{D}_m(\mathscr{C}(a))$,  i.e.
\begin{equation}\label{eq4.12}
I_m=\mathscr{P}_1^{k_1} \mathscr{P}_2^{k_2}\dots \mathscr{P}_n^{k_n},
\end{equation}
where $\mathscr{P}_i$  are the prime ideals of $E$. 
We can now use Theorem \ref{thm2.5} and remark \ref{rmk4.4} 
to conclude that the dynamial  $\mathscr{D}_m(\mathscr{C}(a))$
corresponds to a surface link $\mathscr{X}\subset \mathscr{M}^4$ 
obtained by the spinning of a link $\mathscr{L}$ defined by the
prime factorization (\ref{eq4.12}).   Item (ii) of theorem \ref{thm4.1} is proved.

\bigskip
(iii) Let $m=p$ be a prime number. 
We repeat the argument of item (ii) and we apply item (iii) of Theorem \ref{thm2.5}. 
In this case one gets a surface knot  $\mathscr{X}\subset \mathscr{M}^4$,
since there is only one component in the prime factorization formula (\ref{eq4.11}). 
Item (iii) of theorem \ref{thm4.1} is proved. 

\bigskip
Theorem \ref{thm4.1} is proven. 
\end{proof}

\bigskip
\begin{example}\label{ex4.2}
{\bf (Sphere knots in simply connected 4-manifolds)}

\medskip
(i) Let $\mathscr{W}^4$ be a simply connected 4-dimensional manifold. 
Denote by $K^{ab}$ an abelian extension of the field $\mathbf{Q}$ 
and by $\mathscr{C}(a, K^{ab})$ a cyclic division algebra over the $K^{ab}$.
It follows from \cite[Corollary 1.2]{Nik2}, that 
\begin{equation}\label{eq4.13}
F(\mathscr{W}^4)= \mathscr{C}(a, K^{ab}). 
\end{equation}
Indeed, since the Galois group of $\mathscr{C}(a)=F(\mathscr{W}^4)$
is abelian, so does any subfield of the $\mathscr{C}(a)$. In particular,
the center  of $\mathscr{C}(a)$  must be an abelian extension of $\mathbf{Q}$.

\bigskip
(ii) Consider a sphere  knot:  
\begin{equation}\label{eq4.14}
S^2\hookrightarrow \mathscr{W}^4.
\end{equation}
Theorem \ref{thm4.1} (iii) says that (\ref{eq4.14})  are classified 
by the minimal dynamials:
\begin{equation}\label{eq4.15}
\mathscr{D}_p(\mathscr{C}(a, K^{ab}))=\mathscr{C}(a, K^{ab})\rtimes p\mathbf{Z}\cong \mathscr{C}(a, K_p^{ab}),
\end{equation}
where $K_p^{ab}$ is an abelian extension of the field $\mathbf{Q}_p$ of the $p$-adic numbers. 
The local class field theory  says that the  fields $K_p^{ab}$ are classified by the  open
subgroups of the group  $\mathbf{Q}_p^{\times}$ of index $n=\deg~(K^{ab}|\mathbf{Q})$.  
Thus the classification of the sphere knots  (\ref{eq4.14})  depends on the finite index open 
subgroups of the  $\mathbf{Q}_p^{\times}$.

\bigskip
(iii) Let us restrict to the case $\mathscr{W}^4\cong S^4$ is a 4-dimensional sphere. 
By theorem \ref{thm4.1} (i), we have $F(S^4)=\mathbb{H}(\mathbf{Q})$ and from (\ref{eq4.15}):
\begin{equation}\label{eq4.17}
\mathscr{D}_p(\mathbb{H}(\mathbf{Q}))\cong \mathbb{H}(\mathbf{Q}_p). 
\end{equation}
Since the index $n=1$, we conclude that the sphere knots:
\begin{equation}\label{eq4.18}
S^2\hookrightarrow S^4 
\end{equation}
are classified by the primes $p$.  In view of theorem \ref{thm2.5} (iii) and 
that $\mathscr{P}\cong p\mathbf{Z}$, we recover 
 the Artin  classification of sphere knots 
$\pi_1(S^4-S^2)\cong\pi_1(S^3-\mathscr{K})$
by  knots  in $S^3$  [Kamada 2002] \cite[Chapter 10]{K}.
\end{example}

\bigskip
\begin{remark}
It is known, that all division algebras over a local field are cyclic. 
The Brauer group of such algebras has the form:
\begin{equation}\label{eq4.19}
Br~(\mathscr{C}(a, K_p^{ab}))\cong \mathbf{Q}/\mathbf{Z},
\end{equation}
see e.g. \cite[Section 17.10]{P}.  The Brauer group classifies the 
Morita equivalence classes of the algebras  $\mathscr{C}(a, K_p^{ab})$ and,
therefore, the corresponding classes of the sphere knots (\ref{eq4.14}). 
However, the nature of such  classes  is unclear
to the author. 
\end{remark}

\medskip
\begin{remark}\label{rmk4.7}
Using (\ref{eq4.15}),  one rewrite (\ref{eq4.19}) in the form:
\begin{equation}\label{eq4.20}
Br ~(\mathscr{D}_p)\cong \mathbf{Q}/\mathbf{Z}.
\end{equation}
Notice a formal resemblance of (\ref{eq4.20}) to (\ref{eq1.2}). 
This correspondence is part of the Merkurjev-Suslin theory linking the Brauer groups with  the algebraic K-theory.  
\end{remark}
\bibliographystyle{amsplain}

\begin{thebibliography}{99}

\bibitem{Ded1}
R.~Dedekind,
\textit{\"Uber die Theorie der ganzen algebraischen Zahlen}, 
in Vorlesungen \"uber Zahlentheorie, 1863. 


\bibitem{GH}
W.~H.~Gottschalk and G.~A.~Hedlund, 
\textit{Topological Dynamics}, AMS Colloquium Publications
{\bf 36}, 1955.   


\bibitem{K}
S.~Kamada,
\textit{Braid and Knot Theory in Dimension Four},
Mathematical Surveys and Monographs {\bf 95}, 
AMS, 2002.  


\bibitem{Kum1}
E.~E.~Kummer,
\textit{\"Uber die Zerlegung der aus Wurzeln der Einheit gebildeten complexen Zahlen in ihre Primfactoren},
Journal f\"ur die reine und angewandte Mathematik {\bf 35} (1847), 327-367.

\bibitem{M}
M. ~Morishita, \textit{Knots and Primes, 
An Introduction to Arithmetic Topology}, 
Springer Universitext,  London, Dordrecht, Heidelberg, New York, 
2012. 


\bibitem{N}
I.~V.~Nikolaev, \textit{Noncommutative Geometry},
De Gruyter Studies in Math. {\bf 66}, Berlin, 2017.


\bibitem{Nik1}
I.~V.~Nikolaev, 
\textit{Remark on arithmetic topology}, arXiv:1706.06398


\bibitem{Nik2}
I. ~V.~Nikolaev, 
\textit{Arithmetic topology of 4-manifolds}, 	arXiv:1907.03901


\bibitem{Nik3}
I. ~V.~Nikolaev, 
\textit{Untying knots in 4D and Wedderburn's Theorem}, 
Comm. Algebra {\bf 49} (2021), 4133-4135. 


\bibitem{P}
R.~S.~Pierce, 
\textit{Associative Algebras}, GTM {\bf 88}, Springer, 1982.   

\bibitem{Pie1}
R.~Piergallini, 
\textit{Four-manifolds as 4-fold branched covers of $S^4$}, 
Topology {\bf 34} (1995), 497-508. 

\bibitem{Wei1}
A.~Weil,  \textit{Numbers of solutions of equations in finite fields},   Bull. Amer. Math. Soc. {\bf 55}
(1949), 497-508.  


\end{thebibliography}


\end{document}